
\documentclass[12pt]{article}
\usepackage{amsfonts}
\usepackage[T1]{fontenc}
\usepackage[utf8]{inputenc}
\usepackage[letterpaper]{geometry}
\usepackage{amsmath, mathrsfs}
\usepackage{graphicx}
\usepackage[active]{srcltx}
\usepackage[textsize=small]{todonotes}
\geometry{lmargin=1in, rmargin=1in, top=1in, bottom=1in}
\setlength{\parskip}{\medskipamount}

\numberwithin{equation}{section}

\usepackage{amsmath}
\usepackage{graphicx}
\usepackage[active]{srcltx}
\usepackage{amssymb}
\usepackage{color}
\usepackage{amsmath}
\usepackage{graphicx}
\usepackage[active]{srcltx}
\include{srctex}

\setcounter{MaxMatrixCols}{10}
\newtheorem{theorem}{Theorem}[section]

\newtheorem{definition}[theorem]{Definition}

\newtheorem{lemma}[theorem]{Lemma}

\newtheorem{proposition}[theorem]{Proposition}
\newtheorem{remark}[theorem]{Remark}

\newenvironment{proof}[1][Proof]{\textbf{#1.} }{\ \rule{0.5em}{0.5em}}

\def\E{\mathbb{E}}

\def\R{\mathbb{R}}
\def\F{\mathcal{F}}
\def\H{\mathcal{H}}

\def\wh{\widehat}

\def\wh{\widehat}
\def\v{\varphi}
\def\wt{\widetilde}

\begin{document}

\title{ Fractional stochastic wave equation driven by a Gaussian noise rough in space}
\author{Jian Song, Xiaoming Song, and Fangjun Xu}
\date{}
\maketitle
\begin{abstract}
In this article, we consider fractional stochastic wave equations on $\mathbb R$ driven by a multiplicative Gaussian noise which is white/colored in time and has the covariance of a fractional Brownian motion with Hurst parameter $H\in(\frac14, \frac12)$ in space. We prove the existence and uniqueness of the mild Skorohod solution, establish lower and upper bounds for the $p$-th moment of the solution for all $p\ge2$, and obtain the H\"older continuity in time and space variables for the solution. 
\end{abstract}
\noindent {\em MSC 2010:} 60H07; 60H15; 60G15.

\vspace{1mm}

\noindent {\em Keywords:} Fractional Brownian motion; Malliavin calculus; Skorohod integral; stochastic wave equation; intermittency; H\"older continuity.

\section{Introduction}
Consider the following fractional stochastic wave equation (SWE) on $\R$
\begin{equation}\label{swe}
\begin{cases}
 \dfrac{\partial^2 u}{\partial t^2}(t,x)  =-(-\Delta)^{
 \frac\kappa2} u(t,x)+u\dot{W}(t,x), \quad t>0\\
 u(0,x)  =  1,  \frac{\partial u}{\partial t}(0,x)  =  0,
 \end{cases}
\end{equation}
with $\kappa\in(0, 2]$, where $\dot W(t,x)$ is a Gaussian noise with covariance 
\[\E[\dot W(t,x)\dot W(s,y)]=f_0(t-s)f(x-y).\]
In this article, we assume that the noise  is rough in space, i.e., $f(x)=(|x|^{2H})''$ with $H\in(\frac14,\frac12)$, where $(|x|^{2H})''$ means the second derivative of $|x|^{2H}$ in the sense of distribution. Note that for fractional Brownian motion $B^H$ with Hurst parameter $H\in(0,1)$, its derivative (in the sense of distribution) $\dot B^H$ has the covariance $\E[\dot B^H(t) \dot B^H(s)]=H(2H-1)f(t-s)$.  We also assume that  the temporal covariance function $f_0(t)$ is either the Dirac delta function $\delta(t)$ or a nonnegative and nonnegative-definite function such that  $f_0(t)\sim |t|^{2H_0-2}$ with  $H_0\in (\frac12,1)$, i.e.,  $c|t|^{2H_0-2}\le f_0(t)\le C|t|^{2H_0-2}$ for some constants $0<c<C<\infty$.

 The It\^o-type probabilistic approach for stochastic partial differential equations (SPDEs) was established in \cite{walsh}, where Walsh introduced  martingale measures and defined stochastic integrals with respect to the martingale measures, and then SPDEs driven by space-time white noise were investigated.    Following Walsh's approach, SWEs on $\R^d$ with $d\le 2$ were studied, for instance in \cite{df, ms, mueller}.  In \cite{dalang99}, Dalang extended Walsh's stochastic integral and applied it to solve SPDEs whose Green's function is not a function but a Schwartz distribution. In particular Dalang's theory is applicable to  SWEs in $d$-dimension with $d\ge 3$,  and we refer to \cite{cd08, ds09, hhn14} and the references therein for the study of SWEs in high dimensions.   
 
For SPDEs driven by a multiplicative Gaussian noise which is colored in time (i.e, the temporal covariance $f_0$ is not the Dirac delta function), the probabilisitic approach based on martingale properties cannot be applied directly since the noise does not have martingale structure in time. An alternative approach is to apply  Malliavin calculus to study the chaos expansion of the Skorohod solution, see, for instance,  \cite{hhns15, hn09, hns11, s17} for stochastic heat equations (SHEs) and \cite{balan12, bs17} for SWEs. 

In this article, we shall prove the existence and uniqueness of the mild Skorohod solution to \eqref{swe} (Theorem \ref{thm1}), establish lower and upper bounds for the $p$-th moment of the solution for all $p\ge2$ (Proposition \ref{prop1}), and obtain the H\"older continuity for the solution in time and space variables (Proposition \ref{prop2}). In the following, we briefly describe some related recent development on SHEs and SWEs driven by multiplicative Gaussian noise.

Hu and Nualart \cite{hn09} investigated SHEs driven by a multiplicative fractional Brownian sheet that is colored in time and white in space. Hu et al.  \cite{hns11} obtained Feynman-Kac formulae for solutions of SHEs driven by a fractional Brownian sheet, and used them to investigate the regularity of the solutions. The result in \cite{hns11} then was extended to SHEs driven by a general Gaussian noise in Hu et al. \cite{hhns15} and to SHEs with the Laplacian operator being replaced  by the infinitesimal generator of a symmetric L\'evy process in Song \cite{s17}. The noise considered in the papers \cite{hhns15, hn09, hns11, s17} is not ``rough'', as its spatial covariance corresponds to that of fractional Brownian motion with Hurst parameter $H\ge \frac12$. The SHEs and SWEs on $\R$ driven by a Gaussian noise that is white in time and rough in space were investigated in Hu et al. \cite{hhlnt}  and in Balan et al. \cite{bjq15}, respectively.  Recently, Chen conducted a systematic investigation on  SHEs with noise that is rough in space and/or in time  in \cite{chen18, chen19}.

If the solution of a dynamic system with noise develops very high peaks, it is said that the system possesses the intermittency property.  The concept of intermittency arose in physics, and in mathematics it is related to the long-term asymptotics of the moments of the solution.  The intermittency property was studied, for instance, in \cite{bc16, bc95, cd15, ck12, fk13, hhns15, s12} for heat equations, and in \cite{bc16,cjks, dm09} for wave equations. In particular,  precise long-term asymptotics for SHEs was obtained in \cite{chen17, chss, chsx, hln, hln17}, and the second order  Lyapunov exponent  for SWEs was obtained in \cite{bs19}.

For the H\"older continuity of SHEs driven by multiplicative Gaussian noise colored in time, we refer to \cite{bqs, hhns15, hl, s17} and the references therein.  For SWEs with noise white in time,  H\"older continuity of the solutions was studied in  \cite{df} for the spatial dimension $d=2$,  in \cite{ds09, hhn14} for $ d=3$, and in \cite{cd08} for general dimensions; for SWEs with noise correlated in time and space,  H\"older continuity was established in \cite{balan12, bs17} for general dimensions.

Finally, we would like to make some comments on our results. 
\begin{itemize}
\item[(a)]  Note that we require $H>\frac14$ for SWEs in Theorem \ref{thm1}, which was also assumed in \cite{hln} for SHEs with rough spatial noise.  Nevertheless, the approach used in the proof of Theorem \ref{thm1} can be also applied to SHEs  and relax the condition $H>\frac14$ to $H_0+H>\frac34$ (see Remark \ref{remark1}). 
 
\item[(b)] The rate of the bounds for the $p$-th moments obtained in Proposition \ref{prop1} is consistent with the known results in, for instance, \cite{bc16,bs19}.  The lower bound is relatively more difficult to establish. One of the obstacles is that the Fourier transform of the Green's function of the fractional wave equation is not a nonnegative function, and this issue is resolved by showing that the integral of the Fourier transform of the Green's function is positive (see Lemma \ref{lem6}).  


\item[(c)] The H\"older continuity obtained in Proposition \ref{prop2} is consistent with the known results (e.g., \cite[Theorem 5.1]{balan12}, \cite[Proposition 8.3]{bs17}, and \cite[Theorem 7.6]{cd08}) which dealt with SWEs driven by the noise that is not rough in space. The major difference/difficulty of obtaining the H\"older continuity for SWEs with rough spatial noise is the following. Denoting the spectral measure of the spatial covariance $\mu(d\xi)=\wh f(\xi) d\xi$,  the condition 
\begin{equation}\label{eq1.2}
\sup_{\eta\in\R}\int_{\R}\frac{1}{1+|\xi-\eta|^2}\mu(d\xi)<\infty
\end{equation}
plays a critical role in obtaining the H\"older continuity of the solution when the spatial covariance $f$ is a nonnegative, nonnegative definite, and locally integrable function (see, e.g., \cite{balan12, bs17, cd08}). However,  when $H<\frac12$, the spatial covariance $(|x|^{2H})''$ is a genuine distribution (see, e.g., \cite{jolis}), and condition \eqref{eq1.2} is indeed violated (see  \cite[Lemma A.1]{bjq15}). 
\end{itemize}

This article is organized as follows. In Section \ref{sec-preliminary}, some  preliminary results on Malliavin calculus associated with the noise $W$ are provided. In Section \ref{sec-solution}, the existence and uniqueness of the solution to \eqref{swe} is obtained under proper conditions. 
In Section \ref{sec-moment}, we derive the lower and upper bounds for the $p$-th moment of the solution for $p\ge2$ and then deduce the weak intermittency.  In Section \ref{sec-continuity}, the H\"older continuity of the solution in time and space is obtained. Finally, some lemmas used in the preceding sections are gathered in Section \ref{sec-appendix}.

\section{Preliminaries}\label{sec-preliminary}
In this section, we recall some preliminaries on Malliavin calculus associated with the
Gaussian noise $\dot W$. We refer to \cite{nualart} for more details.

Let $\H$ be the completion of the Schwartz space $\mathcal S(\R_+\times \R)$ under the inner product 
\begin{align}\label{eq-cov}
 \langle \varphi, \phi \rangle_\H=C_H\int_{\R_+^2}\int_{\R} f_0(r-s) \wh \varphi(r,\xi)\overline{\wh\phi(s,\xi)} \mu(d\xi) drds,
 \end{align}
 where 
 \begin{equation} \label{eq-CH}
 C_H=\frac{\Gamma(2H+1)\sin(\pi H)}{2\pi}
 \end{equation}
  and $\mu(d\xi)=|\xi|^{1-2H} d\xi$ with $H\in(0, \frac12)$.  Here, $\wh \varphi$ is the Fourier transform of $\varphi$ in the space, i.e.,  for $\varphi\in \mathcal S(\R_+\times \R)$, 
 \[\wh\varphi(s,\xi)=\int_\R e^{-i\xi x} \varphi(s,x) dx.\]  
 
In particular, if $\varphi$ is a measurable function such that $\wh \varphi$ is also a measurable function and
\[\int_{\R_+^2}\int_{\R} f_0(r-s) |\wh \varphi(r,\xi)| |\wh \varphi(s,\xi)| \mu(d\xi) drds<\infty,\]
then $\varphi\in\H.$ Note that $\H$ may contain distributions rather than just measurable functions if $f_0(r-s)\sim |r-s|^{2H_0-2}$ for some $H_0\in (\frac12, 1)$  (see \cite{jolis, pt}).

In a complete probability space $(\Omega, \F, P)$, let $W=\{W(\varphi), \varphi\in\H\}$ be an isnormal Gaussian process with the covariance 
 \[\E[W(\varphi)  W(\phi)]=\langle \varphi, \phi\rangle_\H,\]
 and we also denote
 \[W(\varphi)=\int_{\R_+}\int_\R \varphi(t,x) W(dt,dx).\]
We also call $W(\varphi)$  the Wiener integral of $\varphi$ with respect to $W$. In light of \cite[Theorem 3.1]{pt} and \cite[Proposition 4.1]{jolis}, the Gaussian family $\{W(\varphi), \varphi\in\H\}$ coincides with the linear expansion of the Gaussian family $\{W(t,x), (t,x)\in\R_+\times \R)\}$ with the covariance
\[\E[W(t,x)W(s,y)]=\frac12 \Big(|x|^{2H}+|y|^{2H}-|x-y|^{2H}\Big) \int_0^t\int_0^s f_0(r_1-r_2) dr_1dr_2,\]
and in particular $W(t,x)=W(I_{[0,t]\times[0,x]})$ with the convention $I_{[0,t]\times[x,0]}=-I_{[0,t]\times[0,x]}$ for $x<0.$

For the smooth and cylindrical random variables of the form $F=h(W(\varphi_1), \dots, W(\v_n))$ with $h$ being smooth and its partial derivatives having at most polynomial growth, the Malliavin derivative $DF$ of $F$ is the $\H$-valued random variable defined by 
\[ DF=\sum_{k=1}^n\frac{\partial h}{\partial x_k}(W(\v_1),\dots, W(\v_n)) \v_k. \]
Noting that $D$ is closable from $L^2(\Omega)$ to $L^2(\Omega; \H)$, we define the Sobolev space $\mathbb D^{1,2}$ as the closure of the space of the smooth and cylindrical random variables under the norm
\[\|D\|_{1,2}=\left(\E[F^2]+\E[\|DF\|_\H^2]\right)^{\frac12}. \]

The divergence operator $\delta$, which is also known as the Skorohod integral, is the adjoint of the Malliavin derivative operator $D$ defined by the duality  
\[\E[F\delta(u)]=E[\langle DF, u\rangle_\H],~ \forall F\in \mathbb D^{1,2}, ~\forall u\in \text{Dom } \delta.\]
 Here $\text{Dom } \delta$ is the domain of the divergence operator $\delta$, which is the space of the $\H$-valued random variables $u\in L^2(\Omega; \H)$ such that  $|E[\langle DF, u\rangle_\H]|\le c_F \|F\|_2$ with some constant $c_F$ depending on $F$, for all $F\in \mathbb D^{1,2}$. Thus, for $u\in \text{Dom } \delta$, $\delta(u)\in L^2(\Omega)$. In particular, $\E[\delta(u)]=0.$ We also use the following notation
 \[\delta(u)=\int_{\R_+}\int_\R u(t,x) W(dt,dx), ~~u\in \text{Dom } \delta.\]

Now we recall the Wiener chaos expansion. Let $\mathbb H_0=\R$, and for any integer $n\ge1$, let $\mathbb H_n$ be the closed linear subspace of $L^2(\Omega)$ containing the set of random variables $\{H_n(W(\v)), \v\in\H, \|\v\|_{\H}=1\}$, where $H_n$ is the $n$-th Hermite polynomial, i.e., $H_n(x)=(-1)^ne^{x^2}\frac{d^n}{dx^n}(e^{-x^2})$.  Then $\mathbb H_n$ is called the $n$-th Wiener chaos of $W$.  Denoting by $\mathcal F$ the $\sigma$-field generated by $\{W(\varphi), \varphi\in \H\}$, then we have the following Wiener chaos decomposition
 \[
  L^2(\Omega, \mathcal F, P)=\oplus_{n=0}^\infty \mathbb H_n.
  \] 
For $n\ge1$, denote by $\H^{\otimes n}$ the $n$-th tensor product of $\H$, and let $\wt \H^{\otimes n}$ be the symmetrization of $\H^{\otimes n}$. Then the mapping $I_n(h^{\otimes n})=H_n(W(h))$ for any $h\in \H$ can be extended to a linear isometry between $\wt\H^{\otimes n}$ and the $n$-th Wiener chaos $\mathbb H_n$. Thus, for any random variable $F\in L^2(\Omega, \mathcal F, P)$,  it has the following unique Wiener chaos expansion in the sense of $L^2(\Omega)$, 
 \[F=\E[F]+\sum_{n=1}^\infty I_n(f_n) \text{ with } f_n\in \wt\H^{\otimes n}.\]

Throughout the paper, the generic constant $C$  varies at different places.

\section{Existence and uniqueness of the solution}\label{sec-solution}

 In this section, we  obtain the existence and uniqueness of the mild Skorohod solution to \eqref{swe} under some conditions in Theorem \ref{thm1}, and we show that in Proposition \ref{prop3.4} these conditions are also necessary if the noise is white in time.
 
  Let $G_t(x)$ be the fundamental solution of the equation $\frac{\partial^2}{\partial t^2} u+(-\Delta)^{\frac\kappa 2} u=0 $ on $\R^d$, then its Fourier transform in space $\wh G_t(\xi)$ solves the following equation
 \[
 \frac{\partial^2\wh G_t(\xi)}{\partial t^2}+|\xi|^\kappa \wh G_t(\xi)=0,
 \]
  and it is given by (see \cite[Section 2.2]{fkn}; \cite[Example 6]{dalang99} and \cite[Chapter 1 Section 7]{treves} for the case $\kappa=2$)
  \[\wh G_t(\xi)=\frac{\sin(t|\xi|^{\kappa/2})}{|\xi|^{\kappa/2}}.\] Recall that when 
  $\kappa=2$, the Green's function $G_t(x)$ is a measurable function for $d\le 2$,
\begin{equation*}
G_t(x)=\begin{cases} \dfrac{1}{2}I_{\{|x| <t\}}, & \mbox{if} \ d=1,\\
\quad \\
\dfrac{1}{2\pi}\dfrac{1}{\sqrt{t^2-|x|^2}}I_{\{|x|<t\}} ,& \mbox{if} \ d=2,
\end{cases}
\end{equation*}  
$G_t(\cdot)=\frac{1}{4\pi t}\sigma_t$ for  $d=3$, where $\sigma_t$ is the surface measure on the sphere $\{x \in \R^3; |x|=t\}$, and $G_t(\cdot)$ is a genuine distribution with compact support in $\R^d$ if $d\ge3$.  Note that when $\kappa\in(1,2)$ and $d=1$, the Green's function $G_t(x)\in L^2(\R)$ for all $t\ge 0$ as its Fourier transform  $\frac{\sin(t|\xi|^{\kappa/2})}{|\xi|^{\kappa/2}}\in L^2(\R).$

 We consider the following filtration
\[
\mathcal F_t=\sigma\{W(I_{[0,s]}\varphi), 0\le s\leq t, \varphi\in \mathcal S(\mathbb{R})\}\vee \mathcal N,
\]
where $\mathcal N$ denotes the collection of null sets.
\begin{definition}
An adapted random field $u=\{u(t,x), t\ge 0, x\in\R\}$ is a mild 
Skorohod solution to
\eqref{swe} if $\E[u^2(t,x)]<\infty$ for all $(t,x)\in \R_+\times\R$ and it satisfies the following integral equation
\begin{equation}\label{spde-skr}
u(t,x)=1+\int_0^t\int_{\R} G_{t-s}(x-y) u(s,y) W(ds,dy), 
\end{equation}
where the integral on the right-hand side is a Skorohod integral. 
\end{definition}

Note that if $\E[|u(t,x)|^2]<\infty$, the solution has a unique Wiener chaos expansion 
\[
u(t,x)=\sum_{n=0}^\infty I_n(g_n(\cdot, t, x))
\]
with $g_n(\cdot, t,x)\in \wt \H^{\otimes n}$.   Now assume that $u(t,x)$ is a mild Skorohod solution to \eqref{swe}. Let $\mathcal P_n$ be the set of permutations on $\{1,2, \dots, n\}$.  Following the approach used in \cite[Section 4.1]{hn09}, we get 
\begin{equation}\label{eq-hn}
g_n(s_1,\dots, s_n, x_1,\dots, x_n, t,x)=\frac{1}{n!} G_{t-s_{\rho(n)}}(x-x_{\rho(n)})\cdots G_{s_{\rho(2)}-s_{\rho(1)}}(x_{\rho(2)}-x_{\rho(1)}),
\end{equation}
where $\rho\in \mathcal P_n$ is the permutation such that $0<s_{\rho(1)}<s_{\rho(2)}<\cdots<s_{\rho(n)}<t.$ Thus, to prove the existence and uniqueness of the solution to \eqref{swe} is equivalent to prove 
\begin{equation}
\E[|u(t,x)|^2]=\sum_{n=0}^\infty n! \|g_n(\cdot, t,x)\|^2_{\H^{\otimes n}}<\infty.\label{2mom}
\end{equation}

\begin{theorem}\label{thm1}
Assume that $H_0\in[\frac12,1)$, $H\in(\frac14,\frac12)$ and $\kappa\in(3-4H,2]$.  Then there exists a unique square integrable mild Skorohod solution to \eqref{swe}.
\end{theorem}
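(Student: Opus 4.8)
The plan is to verify the chaos criterion \eqref{2mom}, i.e.\ to show $\sum_{n\ge0}n!\,\|g_n(\cdot,t,x)\|^2_{\H^{\otimes n}}<\infty$ with $g_n$ given by \eqref{eq-hn}; this yields existence (the convergent series defines the solution) and uniqueness simultaneously (the coefficients $g_n$ are forced by the equation, so any two square-integrable solutions share the same expansion). The first step is to compute the spatial Fourier transform $\wh g_n$ (in $x_1,\dots,x_n$). On the simplex $0<s_1<\cdots<s_n<t$, setting $s_{n+1}=t$ and $\eta_k=\xi_1+\cdots+\xi_k$, the substitution $y_k=x_{k+1}-x_k$ telescopes the product in \eqref{eq-hn} into
\[
\wh g_n=\frac{1}{n!}\,e^{-ix(\xi_1+\cdots+\xi_n)}\prod_{k=1}^n \wh G_{s_{k+1}-s_k}(\eta_k),\qquad \wh G_s(\eta)=\frac{\sin(s|\eta|^{\kappa/2})}{|\eta|^{\kappa/2}}.
\]
Substituting into the tensorized inner product \eqref{eq-cov} and using the symmetry of $g_n$ (which supplies the factor $n!$), one finds in the white-in-time case $f_0=\delta$ that $n!\,\|g_n\|^2_{\H^{\otimes n}}=C_H^n\int_{\text{simplex}}\int_{\R^n}\prod_k|\wh G_{s_{k+1}-s_k}(\eta_k)|^2\prod_k|\eta_k-\eta_{k-1}|^{1-2H}\,d\vec\xi\,d\vec s$, where $\eta_0:=0$ and $\xi_k=\eta_k-\eta_{k-1}$.

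The essential difficulty -- and the reason condition \eqref{eq1.2} fails here -- is that the roughness weights $|\eta_k-\eta_{k-1}|^{1-2H}$ couple consecutive frequencies, so the frequency integral does not factorize. To break the coupling I would use the subadditivity of $x\mapsto x^{1-2H}$, valid since $1-2H\in(0,1)$ for $H\in(\tfrac14,\tfrac12)$:
\[
|\eta_k-\eta_{k-1}|^{1-2H}\le |\eta_k|^{1-2H}+|\eta_{k-1}|^{1-2H}.
\]
Expanding the product over $k$ into $2^n$ terms makes each summand factor over the $\eta_k$, reducing the frequency integral to single integrals $\int_\R|\wh G_{s_{k+1}-s_k}(\eta)|^2|\eta|^{b}\,d\eta$ with exponents $b\in\{0,\,1-2H,\,2-4H\}$. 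Such an integral converges near infinity exactly when $\kappa-b>1$; the worst case $b=2-4H$ gives precisely the hypothesis $\kappa>3-4H$ (near the origin $|\wh G_s(\eta)|^2\sim s^2|\eta|^{-\kappa+\kappa}$ is harmless). This is the main obstacle and the origin of the constraint on $\kappa$.

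With the frequency integral factorized, the scaling $\eta\mapsto(s_{k+1}-s_k)^{-2/\kappa}\eta$ gives the exact identity $\int_\R|\wh G_s(\eta)|^2|\eta|^{b}\,d\eta=c_b\,s^{\,c}$ with $c=\tfrac{2}{\kappa}(\kappa-1-b)$, and $c\ge c_{\min}:=\tfrac2\kappa(\kappa-3+4H)>0$ under the hypothesis. Hence $n!\,\|g_n\|^2$ is bounded by $(2C_H)^n$ times a maximum over the $2^n$ sign choices of a Dirichlet simplex integral,
\[
\int_{0<s_1<\cdots<s_n<t}\prod_{k=1}^n(s_{k+1}-s_k)^{c_k}\,d\vec s=t^{\,n+\sum_k c_k}\,\frac{\prod_k\Gamma(c_k+1)}{\Gamma\big(n+1+\sum_k c_k\big)},\qquad s_{n+1}:=t.
\]
Since $\sum_k c_k\ge n\,c_{\min}$, the denominator grows like $\Gamma\big((1+c_{\min})n+1\big)$, so $n!\,\|g_n\|^2\le (C't^{\beta})^n/\Gamma\big((1+c_{\min})n+1\big)$ for some $\beta>0$, and the series \eqref{2mom} converges for every $t$ (it is of Mittag-Leffler type in a power of $t$).

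For the colored case $f_0(t)\sim|t|^{2H_0-2}$ with $H_0\in(\tfrac12,1)$, I would bound $f_0(r-s)\le C|r-s|^{2H_0-2}$, apply the same spatial subadditivity expansion, and split the remaining temporal double integral: Cauchy-Schwarz in the frequency variable $\eta_k$ controls $|\wh G_{r\text{-gap}}(\eta_k)\wh G_{s\text{-gap}}(\eta_k)|\,|\eta_k|^{b_k}$ by the geometric mean of two integrals of the type just analysed (same condition $\kappa>3-4H$), after which a Schur-type estimate handles the kernel $|r-s|^{2H_0-2}$. The point where $H_0\ge\tfrac12$ enters is that $2H_0-2>-1$ makes the row integrals $\int_0^t|r-s|^{2H_0-2}\,ds\le C\,t^{2H_0-1}$ finite, so the temporal integration reduces to the white-noise simplex integral at the cost of benign factors $C^n$ and powers of $t$; since a larger $H_0$ only improves temporal integrability, the binding constraint remains $\kappa>3-4H$ and the same $\Gamma$-gain gives convergence. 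Uniqueness then follows at once, since any square-integrable mild solution must have the chaos coefficients \eqref{eq-hn}, which we have shown are summable.
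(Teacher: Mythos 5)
Your argument is correct in substance and, for the main difficulty, coincides with the paper's: both proofs reduce matters to \eqref{2mom}, pass to the Fourier side where $\wh G_s(\eta)=\sin(s|\eta|^{\kappa/2})/|\eta|^{\kappa/2}$, and then decouple the consecutive frequencies by the subadditivity $|\eta_j-\eta_{j-1}|^{1-2H}\le|\eta_j|^{1-2H}+|\eta_{j-1}|^{1-2H}$, so that the worst exponent $2-4H$ produces exactly the constraint $\kappa>3-4H$ via the convergence of $\int\sin^2(x)\,x^{-\alpha}dx$ for $\alpha\in(1,3)$; the scaling identity, the Dirichlet simplex integral and the Stirling estimate for $\Gamma(an+b)$ are then used identically. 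Where you genuinely diverge is the colored-in-time case. The paper invokes the M\'emin--Mishura--Valkeila-type inequality (Lemma B.3 of \cite{bc16}) to bound the double temporal integral against the kernel $\prod_j|s_j-r_j|^{2H_0-2}$ by $\big(\int_{[0,t]^n}(\int|\F g_n({\bf s})|^2\mu)^{1/(2H_0)}d{\bf s}\big)^{2H_0}$, which immediately reduces to the diagonal (white-in-time) computation. You instead propose Cauchy--Schwarz followed by a Schur-type row-sum bound $\int_0^t|s-r|^{2H_0-2}dr\le Ct^{2H_0-1}$, reducing to the $L^1$ (white-noise) quantity at the cost of $(Ct^{2H_0-1})^n$. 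This is more elementary, requires no external lemma, and (one can check) yields the same factorial gain $(n!)^{-(3\kappa-4+4H)/\kappa}$ and even the same power of $t$ in the summand, so it proves the theorem; the interpolated $L^{1/(2H_0)}$ form is only needed for sharper moment asymptotics elsewhere in the paper.

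One step needs more care than you give it. Your phrase ``Cauchy--Schwarz in the frequency variable $\eta_k$'' presumes that $\F g_n({\bf s},\cdot)(\boldsymbol\xi)$ and $\overline{\F g_n({\bf r},\cdot)(\boldsymbol\xi)}$ factor over a \emph{common} set of variables $\eta_k$. They do not: the partial sums entering the two products are taken along the orderings of ${\bf s}$ and of ${\bf r}$ respectively, and for a generic pair $({\bf s},{\bf r})\in[0,t]^{2n}$ these orderings differ, so no single change of variables diagonalizes both factors simultaneously. The repair is standard and preserves your conclusion: apply Cauchy--Schwarz to the whole $\boldsymbol\xi$-integral first, obtaining $\Phi({\bf s})^{1/2}\Phi({\bf r})^{1/2}$ with $\Phi({\bf s})=\int_{\R^n}|\F g_n({\bf s},\cdot)(\boldsymbol\xi)|^2\mu(d\boldsymbol\xi)$, each factor being a diagonal quantity analysable in its own adapted coordinates, and only then run the Schur test on the kernel $\prod_j|s_j-r_j|^{2H_0-2}$. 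With that adjustment the colored case goes through as you describe.
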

\begin{proof} It suffices to prove \eqref{2mom}. 
 We use the notation ${\boldsymbol \xi}:=(\xi_1,\dots, \xi_n)$ and similarly  for $\bf{s} $, $\bf{r}$ and $\boldsymbol {\mu}(d\boldsymbol{\xi})$. 

We first consider the case $H_0\in(\frac12,1)$. Since we assume that $f_0(s)\sim |s|^{2H_0-2}$  for $H_0\in (\frac12, 1)$, throughout the rest of the article, we will simply assume $f_0(s)=|s|^{2H_0-2}$ in this case. Note that
\begin{align}
  &n! \|g_n(\cdot, t,x)\|^2_{\H^{\otimes n}}\notag\\
= &  n! \int_{\R^{n}} \int_{[0,t]^{2n}} \F g_n({\bf s},
  \cdot,t,x)(\boldsymbol{\xi})\overline{\F g_n({\bf r},\cdot, t,x)(\boldsymbol{\xi})} \prod_{j=1}^n |s_j-r_j|^{2H_0-2}d{\bf s}d{\bf r}  \boldsymbol {\mu} (d\boldsymbol{\xi})\label{chaos}
    \end{align}
  with
   \begin{equation*}
   \F g_n({\bf s},  \cdot,t,x)(\boldsymbol{\xi})=\frac1{n!} e^{-ix (\xi_1+\dots+\xi_n)}\prod_{j=1}^n \frac{\sin((s_{\rho(j+1)}-s_{\rho(j)})|\xi_{\rho(1)}+\dots+\xi_{\rho(j)}|^{\kappa/2})}{|\xi_{\rho(1)}+\dots+\xi_{\rho(j)}|^{\kappa/2}},
  \end{equation*}
  where we  use the convention $s_{\rho(n+1)}=t.$  Thus, by Lemma B.3 in \cite{bc16} (see also \cite{mmv}) and a change of variables, we have
  \begin{align}
 & n! \|g_n(\cdot, t,x)\|^2_{\H^{\otimes n}}\notag\\
 \le & n! \bigg(\int_{[0,t]^{n}}\Big(\int_{\R^{n}} |\F g_n({\bf s},\cdot, t,x)(\boldsymbol\xi)|^2 \boldsymbol {\mu}(d\boldsymbol{\xi})\Big)^{\frac1{2H_0}}d{\bf s}\bigg)^{2H_0}\notag\notag\\
 =& \frac1{n!} \bigg(\int_{[0,t]^{n}}\Big(\int_{\R^{n}}  \prod_{j=1}^n \frac{\sin^2((s_{\rho(j+1)}-s_{\rho(j)})|\xi_{\rho(1)}+\dots+\xi_{\rho(j)}|^{\kappa/2})}{|\xi_{\rho(1)}+\dots+\xi_{\rho(j)}|^{\kappa}} \boldsymbol {\mu}(d\boldsymbol{\xi})\Big)^{\frac1{2H_0}}d{\bf s}\bigg)^{2H_0}\notag\\
 =& (n!)^{2H_0-1} \bigg(\int_{[0,t]_<^{n}}\Big(\int_{\R^{n}} \prod_{j=1}^n \frac{\sin^2((s_{j+1}-s_{j})|\xi_{1}+\dots+\xi_{j}|^{\kappa/2})}{|\xi_{1}+\dots+\xi_{j}|^{\kappa}}
 \boldsymbol {\mu}(d\boldsymbol{\xi})\Big)^{\frac1{2H_0}}d{\bf s}\bigg)^{2H_0}\notag\\
 =& (n!)^{2H_0-1} \bigg(\int_{[0,t]_<^{n}}\Big(\int_{\R^{n}} \prod_{j=1}^n \frac{\sin^2((s_{j+1}-s_{j})|\eta_{j}|^{\kappa/2})}{|\eta_{j}|^{\kappa}} |\eta_j-\eta_{j-1}|^{1-2H}d\boldsymbol{\eta}\Big)^{\frac1{2H_0}}d{\bf s}\bigg)^{2H_0}, \label{eq2.4}
    \end{align}
 where  $ [0,t]^n_<=[0=s_0<s_1<\dots<s_n<s_{n+1}=t]$.

Let $\mathcal A_n$ be a subset of the index set  $\big\{(\alpha_1, \dots, \alpha_n)\in \{0, 1,2\}^n\big\}$ such that 
\[x_1\prod_{j=2}^n(x_j+x_{j-1})=\sum_{\alpha\in \mathcal A_n}\prod_{j=1}^n x_j^{\alpha_j}.\]
Then $\#(\mathcal A_n)=2^{n-1}$, and for each $\alpha\in\mathcal A_n$,  we have the following properties: $\alpha_1\in\{1,2\}, \alpha_n\in \{0, 1\},~ \alpha_2, \dots, \alpha_{n-1}\in \{0, 1, 2\}, ~\sum_{j=1}^n \alpha_j=n$ and $\alpha_j+\alpha_{j+1}\in\{1,2,3\}$ for $j=1,2,\dots, n-1$.  Hence, noting that $|a+b|^{1-2H}\le |a|^{1-2H}+|b|^{1-2H}$, we get
\begin{equation}
\prod_{j=1}^n |\eta_j-\eta_{j-1}|^{1-2H}= |\eta_1|^{1-2H}\prod_{j=2}^n |\eta_j-\eta_{j-1}|^{1-2H}\le  \sum_{\alpha\in \mathcal A_n}\prod_{j=1}^n |\eta_j|^{(1-2H)\alpha_j}.\label{eq-prod}
\end{equation}
 Using \eqref{eq-prod} and the fact that $(\sum x_m)^{\frac{1}{2H_0}}\leq \sum x^{\frac{1}{2H_0}}_m$ for all $x_m\geq 0$, the estimation \eqref{eq2.4} now becomes
  \begin{align}
 & n! \|g_n(\cdot, t,x)\|^2_{\H^{\otimes n}}\notag\\
 \le & (n!)^{2H_0-1} \bigg(\int_{[0,t]_<^{n}}\Big(\int_{\R^{n}} \prod_{j=1}^n \frac{\sin^2((s_{j+1}-s_{j})|\eta_{j}|^{\kappa/2})}{|\eta_j|^\kappa} |\eta_j-\eta_{j-1}|^{1-2H}d\boldsymbol{\eta}\Big)^{\frac1{2H_0}}d{\bf s}\bigg)^{2H_0}\notag\\
 \le & (n!)^{2H_0-1} \bigg(\int_{[0,t]_<^{n}}\Big(\sum_{\alpha\in\mathcal A_n}\int_{\R^{n}} \prod_{j=1}^n \frac{\sin^2((s_{j+1}-s_{j})|\eta_{j}|^{\kappa/2})}{|\eta_j|^\kappa} |\eta_j|^{\alpha_j(1-2H)}d\boldsymbol{\eta}\Big)^{\frac1{2H_0}}d{\bf s}\bigg)^{2H_0}\notag\\
 \le & (n!)^{2H_0-1} \bigg(\int_{[0,t]_<^{n}}\sum_{\alpha\in\mathcal A_n}\Big(\int_{\R^{n}} \prod_{j=1}^n \frac{\sin^2((s_{j+1}-s_{j})|\eta_{j}|^{\kappa/2})}{|\eta_j|^\kappa} |\eta_j|^{\alpha_j(1-2H)}d\boldsymbol{\eta}\Big)^{\frac1{2H_0}}d{\bf s}\bigg)^{2H_0}\notag\\ 
= & (n!)^{2H_0-1} \Bigg(\int_{[0,t]_<^{n}}\sum_{\alpha\in\mathcal A_n} \Big(\frac2\kappa\Big)^{\frac{n}{2H_0}} \prod_{j=1}^n  (s_{j+1}-s_j)^{\frac1{2H_0}[2-\frac2\kappa-\frac2\kappa   \alpha_j(1-2H)]}\notag \\
&\qquad\qquad \qquad  \left(\int_{\R}\frac{\sin^2(\eta)}{\eta^2} |\eta|^{\frac2\kappa\alpha_j(1-2H)+\frac2\kappa-1}d\eta\right)^{\frac1{2H_0}}d{\bf s}\Bigg)^{2H_0}.\label{eq-2.6} 
    \end{align}
It follows from  Lemma \ref{lem5} and the condition $\kappa>3-4H$ that for all $\alpha_j\in\{0, 1, 2\}$ 
\begin{align*}
&\int_{\R}\frac{\sin^2(\eta)}{\eta^2} |\eta|^{\frac2\kappa\alpha_j(1-2H)+\frac2\kappa-1}d\eta<\infty.
\end{align*}   
 Lemma \ref{lem5} is applicable here, since the condition $\kappa>3-4H$ implies $\frac2\kappa\alpha_j(1-2H)+\frac2\kappa-3\in (-3,-1)$ for all $\alpha_j\in\{0,1, 2\}$.
   
    Therefore, one can find a positive constant $C$ depending only on $(\kappa, H_0, H)$ such that
    \begin{align*}
    & n! \|g_n(\cdot, t,x)\|^2_{\H^{\otimes n}}\\
    \le & C^n (n!)^{2H_0-1} \bigg(\int_{[0,t]_<^{n}}\sum_{\alpha\in \mathcal A_n} \prod_{j=1}^n  (s_{j+1}-s_j)^{\frac1{2H_0}[2-\frac2\kappa-\frac2\kappa   \alpha_j(1-2H)]}  d{\bf s}\bigg)^{2H_0}.
    \end{align*}
 For each fixed $\alpha\in \mathcal A_n$, denote 
$
\beta_j=\frac1{2H_0}\left[2-\frac2\kappa-\frac2\kappa   \alpha_j(1-2H)\right],\  j=1, \dots, n,
$ and 
$
\beta=\sum_{j=1}^n \beta_j=\frac{n}{H_0}\left[\left(1-\frac2\kappa\right)+\frac{2H}{\kappa} \right].
$ Note that  $H_0\in(\frac12,1)$, $H\in(\frac14,\frac12)$ and $\kappa>3-4H$ implies that  $\beta_j>0 $ and $\beta>0$.  By Lemma \ref{lem2} we have 
\[\int_{[0,t]_<^{n}}  \prod_{j=1}^n  (s_{j+1}-s_j)^{\beta_j}   d{\bf s}=\frac{\prod_{i=1}^n\Gamma(1+\beta_i)t^{n+\beta}}{\Gamma(n+1+\beta)}. \]
 Therefore, from \eqref{gamma-appr} in Lemma \ref{lem22} with $a=1+\frac1{H_0}\left[\left(1-\frac{2}{\kappa}\right)+\frac{2H}{\kappa}\right]$ and $b=1$, and the fact  $\#(\mathcal A_n)=2^{n-1}$, it follows that there exists some positive constant $C$ such that,
\begin{align}
& n! \|g_n(\cdot, t,x)\|^2_{\H^{\otimes n}}\notag\\
\le &\ C^n (n!)^{2H_0-1} \bigg(\frac{t^{n+\beta}}{\Gamma(n+1+\beta)}\bigg)^{2H_0}\notag\\
=&\ \frac{C^n (n!)^{2H_0-1}t^{n\left(2H_0+2\left[\left(1-\frac2\kappa\right)+\frac{2H}\kappa\right]\right)}}{\Gamma(an+1)^{2H_0}}
\sim \frac{ C^nt^{n\left(2H_0+2\left[\left(1-\frac2\kappa\right)+\frac{2H}\kappa\right]\right)}}{(n!)^{2\left[\left(1-\frac{2}{\kappa}\right)+\frac{2H}{\kappa}\right]+1} a^{2H_0an+H_0}n^{H_0(1-a)}}\label{eq-2.7}
\end{align}
when $n\to \infty$. Notice that there exists $
\lambda>1$ such that $\lambda^{-n}\leq a^{2H_0an+H_0} n^{H_0(1-a)} \leq \lambda^n$ for all $n$. Hence,  by \eqref{power-est} in Lemma \ref{lem22},  there exists a positive constant $C$ such that 
\begin{align}\label{eq2.7}
 \E[|u(t,x)|^2]=\sum_{n=0}^\infty n! \|g_n(\cdot, t,x)\|^2_{\H^{\otimes n}}&\le \sum_{n=0}^\infty \frac{C^nt^{n\left(2H_0+2\left[\left(1-\frac2\kappa\right)+\frac{2H}\kappa\right]\right)}}{(n!)^{2\left[\left(1-\frac{2}{\kappa}\right)+\frac{2H}{\kappa}\right]+1}}\notag\\
 &\le C \exp\left(Ct^{\frac{2\kappa H_0+2(\kappa-2)+4H}{3\kappa-4+4H}}\right).
 \end{align}

Next, we consider the case $H_0=\frac12$, i.e, $f_0(t)=\delta(t)$, and we have 
\begin{align*}
  &n! \|g_n(\cdot, t,x)\|^2_{\H^{\otimes n}}\notag\\
= &  n! \int_{\R^{n}} \int_{[0,t]^{n}} \left|\F g_n({\bf s},  \cdot,t,x)(\boldsymbol{\xi})\right|^2d{\bf s}  \boldsymbol {\mu} (d\boldsymbol{\xi})\\
  =& \frac1{n!} \int_{[0,t]^{n}}\int_{\R^{n}}  \prod_{j=1}^n \frac{\sin^2((s_{\rho(j+1)}-s_{\rho(j)})|\xi_{\rho(1)}+\dots+\xi_{\rho(j)}|^{\kappa/2})}{|\xi_{\rho(1)}+\dots+\xi_{\rho(j)}|^{\kappa}} \prod_{j=1}^n|\xi|^{1-2H}d{\boldsymbol \xi}d{\bf s}\\
 =& \int_{[0,t]_<^{n}}\int_{\R^{n}} \prod_{j=1}^n \frac{\sin^2((s_{j+1}-s_{j})|\xi_{1}+\dots+\xi_{j}|^{\kappa/2})}{|\xi_{1}+\dots+\xi_{j}|^{\kappa}}
\prod_{j=1}^n|\xi|^{1-2H}d{\boldsymbol \xi}d{\bf s}\\
 =& \int_{[0,t]_<^{n}}\int_{\R^{n}} \prod_{j=1}^n \frac{\sin^2((s_{j+1}-s_{j})|\eta_{j}|^{\kappa/2})}{|\eta_{j}|^{\kappa}} \prod_{j=1}^n|\eta_j-\eta_{j-1}|^{1-2H}d\boldsymbol{\eta}d{\bf s}.
     \end{align*}
 The last term in the above equation equals the right-hand side of   \eqref{eq2.4} with $H_0=\frac12$. Analogue to the arguments in \eqref{eq-prod}-\eqref{eq2.7}, we shall get the following estimation for the second moment 
 \begin{equation}\label{eq2.8}
 \E[|u(t,x)|^2]=\sum_{n=0}^\infty n! \|g_n(\cdot, t,x)\|^2_{\H^{\otimes n}}\le C\exp \left(Ct\right).
 \end{equation}
We complete the proof.\hfill
\end{proof}

\begin{remark}\label{remark1}
For SHEs on $\R$ driven by a multiplicative Gaussian noise that is rough in space, the existence and uniqueness of the mild Skorohod solution  was obtained in \cite{hhlnt} for the noise white in time and in \cite{hln} for the noise colored in time. The condition $H>\frac14$ was assumed in both \cite{hhlnt} and \cite{hln}.  However, the method used in the proof of the above theorem suggests that the condition can be reduced to $H_0+H>\frac34,$ and this is consistent with the result in \cite{chen18}.  Indeed, for the following SHE on $\R$,
\begin{equation*}
\begin{cases}\dfrac{\partial u^h}{\partial t}(t,x)=\frac12 \Delta u^h(t,x)+u^h\dot W(t,x), ~~ t>0\\
u^h(0,x)=1, 
\end{cases}
\end{equation*}
 the Green's function is the heat kernel $G_t^h(x)=\frac1{\sqrt{2\pi t}}e^{-\frac{|x|^2}{2t}}$. Consequently, the Wiener chaos expansion of the solution is 
 \[u^h(t,x)=\sum_{n=0}^\infty I_n(g_n^h(\cdot, t, x)),\]
where
\begin{equation*}
g_n^h(s_1,\dots, s_n, x_1,\dots, x_n, t,x)=\frac{1}{n!} G^h_{t-s_{\rho(n)}}(x-x_{\rho(n)})\cdots G^h_{s_{\rho(2)}-s_{\rho(1)}}(x_{\rho(2)}-x_{\rho(1)})
\end{equation*}
and 
 \begin{equation*}
   \F g^h_n({\bf s},  \cdot,t,x)(\boldsymbol{\xi})=\frac1{n!} e^{-ix (\xi_1+\dots+\xi_n)}\prod_{j=1}^n \exp\left[- \frac12(s_{\rho(j+1)}-s_{\rho(j)})|\xi_{\rho(1)}+\dots+\xi_{\rho(j)}|^2\right].
  \end{equation*}
Now, the second moment of each chaos is
\begin{align*}
  &n! \|g^h_n(\cdot, t,x)\|^2_{\H^{\otimes n}}\\
= &  n! \int_{\R^{n}} \int_{[0,t]^{2n}} \F g^h_n({\bf s}
  \cdot,t,x)(\boldsymbol{\xi})\overline{\F g^h_n({\bf r},\cdot, t,x)(\boldsymbol{\xi})} \prod_{j=1}^n |s_j-r_j|^{2H_0-2}d{\bf s}d{\bf r}  \boldsymbol {\mu} (d\boldsymbol{\xi}),\label{chaos}\\
  \le & n! \bigg(\int_{[0,t]^{n}}\Big(\int_{\R^{n}} |\F g^h_n({\bf s},\cdot, t,x)(\boldsymbol\xi)|^2 \boldsymbol {\mu}(d\boldsymbol{\xi})\Big)^{\frac1{2H_0}}d{\bf s}\bigg)^{2H_0}\notag\notag\\
 =& \frac1{n!} \bigg(\int_{[0,t]^{n}}\Big(\int_{\R^{n}} \prod_{j=1}^n \exp\Big[-[s_{\rho(j+1)}-s_{\rho(j)}]|\xi_{\rho(1)}+\cdots+ \xi_{\rho(j)}|^2\Big] \boldsymbol {\mu}(d\boldsymbol{\xi})\Big)^{\frac1{2H_0}}d{\bf s}\bigg)^{2H_0}\notag\\
 =& (n!)^{2H_0-1} \bigg(\int_{[0,t]_<^{n}}\Big(\int_{\R^{n}} \prod_{j=1}^n \exp\big[-[s_{j+1}-s_{j}]|\xi_{1}+\cdots+ \xi_{j}|^2\big] \boldsymbol {\mu}(d\boldsymbol{\xi})\Big)^{\frac1{2H_0}}d{\bf s}\bigg)^{2H_0}\notag\\
 =& (n!)^{2H_0-1} \bigg(\int_{[0,t]_<^{n}}\Big(\int_{\R^{n}} \prod_{j=1}^n \exp\Big[-[s_{j+1}-s_{j}]|\eta_{j}|^2\Big] |\eta_j-\eta_{j-1}|^{1-2H}d\boldsymbol{\eta}\Big)^{\frac1{2H_0}}d{\bf s}\bigg)^{2H_0} .
    \end{align*}
  Then, using similar argument as in the proof of Theorem \ref{thm1} and Lemma \ref{lem1}, we have
\begin{align*}
 & n! \|g^h_n(\cdot, t,x)\|^2_{\H^{\otimes n}}\\
 \le & (n!)^{2H_0-1} \bigg(\int_{[0,t]_<^{n}}\Big( \sum_{\alpha\in \mathcal A_n} \int_{\R^{n}} \prod_{j=1}^n \exp\big[-(s_{j+1}-s_{j})|\eta_{j}|^2\big] |\eta_j|^{(1-2H)\alpha_j}d\boldsymbol{\eta}\Big)^{\frac1{2H_0}}d{\bf s}\bigg)^{2H_0}\\
 \le & C^n (n!)^{2H_0-1} \left(\int_{[0,t]_<^{n}}\sum_{\alpha\in \mathcal A_n} \prod_{j=1}^n  (s_{j+1}-s_j)^{-\frac1{4H_0}[1+(1-2H)\alpha_j]}  d{\bf s}\right)^{2H_0}.
    \end{align*}
For each fixed $\alpha\in \mathcal A_n$, denote $\beta_j= \frac1{4H_0}[1+(1-2H)\alpha_j]\in (0, 1)$ noting that  $H_0\in(\frac12,1)$, $H\in(0,\frac12)$ and $H_0+H>3/4$, and then $\beta=\sum_{j=1}^n \beta_j=\frac{n(1-H)}{2H_0}$.  By Lemma \ref{lem2} we have 
\[\int_{[0,t]_<^{n}}  \prod_{j=1}^n  (s_{j+1}-s_j)^{-\beta_j}   d{\bf s}=\frac{\prod_{i=1}^n\Gamma(1-\beta_i)t^{n-\beta}}{\Gamma(n+1-\beta)}. \]
Therefore, since $\#(\mathcal A_n)=2^{n-1}$, there exists some positive constant $C$ such that,
 \begin{align*}
& n! \|g^h_n(\cdot, t,x)\|^2_{\H^{\otimes n}}\\
\le & C^n (n!)^{2H_0-1} \left(\frac{t^{n-\beta}}{\Gamma(n+1-\beta)}\right)^{2H_0}= C^n (n!)^{2H_0-1}\frac{1}{\Gamma(\frac{(2H_0+H-1)n}{2H_0}+1)^{2H_0}} t^{n(2H_0+H-1)}.
\end{align*}
It follows from Lemma \ref{lem22} and a similar argument in dealing with \eqref{eq-2.7} that there exists a positive constant $C$ such that 
\[ \E[|u^h(t,x)|^2]=\sum_{n=0}^\infty n! \|g^h_n(\cdot, t,x)\|^2_{\H^{\otimes n}}\le C\exp\left(Ct^{\frac{2H_0+H-1}{H}}\right).\]

\end{remark}

\begin{proposition}\label{prop3.4}
When  $H_0=\frac12$ and $d=1$, the condition $\kappa>3-4H$ is also a necessary condition for the existence of the square integrable solutions to \eqref{swe}. When  $H_0=\frac12$ and $H_j<\frac12, j=1, \cdots, d$, the equation \eqref{swe} has a solution only if $d=1$. 
\end{proposition}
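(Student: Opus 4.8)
The plan is to prove both statements by exhibiting a single Wiener chaos whose $\H^{\otimes n}$-norm is infinite. Since $H_0=\frac12$ means $f_0=\delta$, the proof of Theorem \ref{thm1} gives the identity $\E[|u(t,x)|^2]=\sum_{n\ge 0}n!\,\|g_n(\cdot,t,x)\|^2_{\H^{\otimes n}}$, so it suffices to find one $n$ with $n!\,\|g_n\|^2_{\H^{\otimes n}}=\infty$, which forces $\E[|u(t,x)|^2]=\infty$ and rules out a square integrable solution. As every integrand appearing below is nonnegative, I would bound the relevant chaos from below on a conveniently chosen subregion.

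For the first statement ($d=1$) I would look at the \emph{second} chaos, which by the $H_0=\frac12$ computation in Theorem \ref{thm1} equals
\begin{equation*}
2!\,\|g_2\|^2_{\H^{\otimes 2}}=\int_{[0,t]^2_<}\int_{\R^2}\frac{\sin^2((s_2-s_1)|\eta_1|^{\kappa/2})}{|\eta_1|^\kappa}\,\frac{\sin^2((t-s_2)|\eta_2|^{\kappa/2})}{|\eta_2|^\kappa}\,|\eta_1|^{1-2H}|\eta_2-\eta_1|^{1-2H}\,d\boldsymbol\eta\,d\mathbf s.
\end{equation*}
The key observation is that $|\eta_1|^{1-2H}|\eta_2-\eta_1|^{1-2H}$ behaves like $|\eta_1|^{2(1-2H)}$ when $\eta_1$ is large and $\eta_2$ stays bounded, which is exactly the critical ``$\alpha_j=2$'' contribution in the proof of Theorem \ref{thm1} (and is why $n=1$, with threshold $\kappa>2-2H$, does not suffice). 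Restricting the spatial integral to $\{|\eta_2|\le 1,\ |\eta_1|\ge 2\}$, where $|\eta_2-\eta_1|\ge|\eta_1|/2$ and hence $|\eta_1|^{1-2H}|\eta_2-\eta_1|^{1-2H}\ge 2^{-(1-2H)}|\eta_1|^{2(1-2H)}$, the integral over this product region factorizes, and I would bound $2!\,\|g_2\|^2_{\H^{\otimes 2}}$ below by a positive multiple of
\begin{equation*}
\int_{[0,t]^2_<}\left(\int_{|\eta_2|\le 1}\frac{\sin^2((t-s_2)|\eta_2|^{\kappa/2})}{|\eta_2|^\kappa}\,d\eta_2\right)\left(\int_{|\eta_1|\ge 2}\frac{\sin^2((s_2-s_1)|\eta_1|^{\kappa/2})}{|\eta_1|^\kappa}\,|\eta_1|^{2(1-2H)}\,d\eta_1\right)d\mathbf s.
\end{equation*}
The inner $\eta_2$-integral is finite and strictly positive for each $s_2<t$, since the integrand is bounded near the origin ($\sin^2(a|\eta_2|^{\kappa/2})/|\eta_2|^\kappa\to a^2$). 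After the substitution $u=(s_2-s_1)|\eta_1|^{\kappa/2}$ the $\eta_1$-integral becomes a constant times $\int^\infty\sin^2(u)\,u^{p}\,du$ with $p=\frac2\kappa(3-4H)-3$; the hypothesis $\kappa\le 3-4H$ gives $p\ge -1$, so this integral diverges. Thus the $\eta_1$-integral is $+\infty$ for every $(s_1,s_2)$ with $s_1<s_2$, and integrating over the positive-measure time simplex yields $2!\,\|g_2\|^2_{\H^{\otimes 2}}=\infty$, establishing necessity of $\kappa>3-4H$.

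For the second statement, with spectral measure $\mu(d\xi)=\prod_{j=1}^d|\xi_j|^{1-2H_j}\,d\xi$ on $\R^d$, I would already get a contradiction from the \emph{first} chaos
\begin{equation*}
\|g_1\|^2_{\H}=\int_0^t\int_{\R^d}\frac{\sin^2((t-s)|\xi|^{\kappa/2})}{|\xi|^\kappa}\prod_{j=1}^d|\xi_j|^{1-2H_j}\,d\xi\,ds.
\end{equation*}
Integrating in $s$ first gives $\int_0^t\sin^2(r|\xi|^{\kappa/2})\,dr=\frac t2-\frac{\sin(2t|\xi|^{\kappa/2})}{4|\xi|^{\kappa/2}}\ge\frac t4$ for $|\xi|$ large, so $\|g_1\|^2_\H$ dominates $\frac t4\int_{|\xi|\ge R}|\xi|^{-\kappa}\prod_j|\xi_j|^{1-2H_j}\,d\xi$. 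Passing to polar coordinates $\xi=\rho\omega$, the radial factor is $\int_R^\infty\rho^{\,2d-2\sum_j H_j-\kappa-1}\,d\rho$ and the angular factor $\int_{S^{d-1}}\prod_j|\omega_j|^{1-2H_j}\,d\omega$ is finite and positive because each exponent $1-2H_j\in(0,1)$. The radial integral diverges precisely when $\kappa\le 2d-2\sum_j H_j$, and for $d\ge 2$ the constraints $H_j<\frac12$ and $\kappa\le 2$ give $2d-2\sum_j H_j>2d-d=d\ge 2\ge\kappa$, so divergence always occurs. Hence $\|g_1\|^2_\H=\infty$ whenever $d\ge 2$, showing that \eqref{swe} can have a square integrable solution only if $d=1$.

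The main obstacle I anticipate is turning the heuristic ``$\sin^2$ averages to a positive constant'' into rigorous lower bounds for the divergent one-dimensional and radial integrals, and justifying that a spatial integral which is $+\infty$ for almost every point of the time simplex forces the whole space-time integral to be infinite. Both are handled by restricting to the intervals on which $\sin^2\ge\frac12$ together with Tonelli's theorem, but care is needed to keep the lower bounds uniform and to confirm that the threshold exponents match $3-4H$ and $2d-2\sum_j H_j$ exactly.
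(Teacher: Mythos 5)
Your proposal is correct. For the first statement it is essentially the paper's own argument: both reduce to showing the second chaos diverges by bounding $|\eta_1|^{1-2H}|\eta_2-\eta_1|^{1-2H}$ below by a constant times $|\eta_1|^{2(1-2H)}$ on a suitable region and invoking the fact that $\int^\infty \sin^2(u)\,u^p\,du=\infty$ for $p\ge -1$ (Lemma \ref{lem5} in the paper); the only difference is your choice of region $\{|\eta_2|\le 1,\ |\eta_1|\ge 2\}$ versus the paper's $\R_+^2$ together with $|\eta_1+\eta_2|^{1-2H}\ge|\eta_1|^{1-2H}$, and both choices work. For the second statement you genuinely diverge from the paper: the paper again works with the second chaos, restricts to $\R_+^{2d}$, and passes to polar coordinates in the $\eta_1$-integral, whereas you observe that for $d\ge 2$ the \emph{first} chaos already fails to lie in $\H$, because $\int_{|\xi|\ge R}|\xi|^{-\kappa}\prod_j|\xi_j|^{1-2H_j}d\xi$ diverges whenever $\kappa\le 2d-2\sum_jH_j$, which always holds under $\kappa\le 2$ and $H_j<\frac12$. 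This is simpler (no need to handle the cross term $|\eta_2^j-\eta_1^j|^{1-2H_j}$) and makes transparent that the obstruction in $d\ge 2$ is already a failure of Dalang's condition for the linear (additive-noise) equation, while the paper's uniform use of the second chaos emphasizes that in $d=1$ the first chaos is not sensitive enough to detect the sharp threshold $3-4H$ — a point you also correctly flag. The technical caveats you raise (uniform lower bounds for the oscillatory integrals and the use of Tonelli to propagate the infinity through the time integral) are exactly the right ones and are routine to discharge.
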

\begin{proof}
When $d=1$, the $L^2$-norm of the second chaos of the solution is 
\begin{align*}
&\|g_2(\cdot, t,x)\|^2_{\mathcal H^{\otimes 2}}\\
  =&2 \int_{[0,t]_<^2}\int_{\R^{2}}   \frac{\sin^2((s_{2}-s_1)|\eta_{1}|^{\kappa/2})}{|\eta_1|^\kappa}\frac{\sin^2((t-s_2)|\eta_{2}|^{\kappa/2})}{|\eta_2|^\kappa}  |\eta_1|^{1-2H}|\eta_2-\eta_1|^{1-2H} d \boldsymbol{\eta}d{\bf s}\\
  \ge & \int_{[0,t]_<^2}\int_{\R_+^{2}}   \frac{\sin^2((s_{2}-s_1)|\eta_{1}|^{\kappa/2})}{|\eta_1|^\kappa}\frac{\sin^2((t-s_2)|\eta_{2}|^{\kappa/2})}{|\eta_2|^\kappa}|\eta_1|^{1-2H}|\eta_2+\eta_1|^{1-2H}d \boldsymbol{\eta}d{\bf s}\\
  \ge & \int_{[0,t]_<^2}\int_{\R_+^{2}}   \frac{\sin^2((s_{2}-s_1)|\eta_{1}|^{\kappa/2})}{|\eta_1|^\kappa}\frac{\sin^2((t-s_2)|\eta_{2}|^{\kappa/2})}{|\eta_2|^\kappa} |\eta_1|^{2(1-2H)}d \boldsymbol{\eta}d{\bf s},
\end{align*}
where the last integral is infinity if $\kappa\leq 3-4H$ due to Lemma \ref{lem5}.

For general dimension $d$, the above estimation becomes
\begin{align*}
&\|g_2(\cdot, t,x)\|^2_{\mathcal H^{\otimes 2}}\\
  =&2 \int_{[0,t]_<^2}\int_{\R^{2d}}   \frac{\sin^2((s_{2}-s_1)|\eta_{1}|^{\kappa/2})}{|\eta_1|^\kappa}\frac{\sin^2((t-s_2)|\eta_{2}|^{\kappa/2})}{|\eta_2|^\kappa}  \prod_{j=1}^d|\eta_1^j|^{1-2H_j}|\eta_2^j-\eta_1^j|^{1-2H_j}d \boldsymbol{\eta}d{\bf s}\\
  \ge & \int_{[0,t]_<^2}\int_{\R_+^{2d}}   \frac{\sin^2((s_{2}-s_1)|\eta_{1}|^{\kappa/2})}{|\eta_1|^\kappa}\frac{\sin^2((t-s_2)|\eta_{2}|^{\kappa/2})}{|\eta_2|^\kappa} \prod_{j=1}^d|\eta_1^j|^{1-2H_j}|\eta_2^j+\eta_1^j|^{1-2H_j}d \boldsymbol{\eta}d{\bf s}\\
  \ge & \int_{[0,t]_<^2}\int_{\R_+^{2d}}   \frac{\sin^2((s_{2}-s_1)|\eta_{1}|^{\kappa/2})}{|\eta_1|^\kappa}\frac{\sin^2((t-s_2)|\eta_{2}|^{\kappa/2})}{|\eta_2|^\kappa} \prod_{j=1}^d |\eta_1^j|^{2(1-2H_j)}d \boldsymbol{\eta}d{\bf s}.
\end{align*}

Now,  by the change of variables \[ 
  \left\{ 
    \begin{array}{lc}
     \eta^1_1=r\cos(\theta_1) & \\
   \eta^2_1=r\sin(\theta_1)\cos(\theta_2)  & \\
  \eta^3_1=r\sin(\theta_1)\sin(\theta_2)\cos(\theta_3)  & \\
  \vdots  &\\
\eta^{d-1}_1=r\sin(\theta_1)\cdots\sin(\theta_{d-2}) \cos(\theta_{d-1})  &  \\
\eta^d_1=r\sin(\theta_1)\cdots\sin(\theta_{d-2}) \sin(\theta_{d-1}),
    \end{array}\right.
\] we have
\begin{align*}
\int_{\R^d_+} \frac{\sin^2(|\eta_1|^{\kappa/2})}{|\eta_1|^\kappa}\prod_{j=1}^d |\eta^j_1|^{2(1-2H_j)} d\eta_1
=&C_d\int_0^\infty\sin^2(r^{\kappa/2}) r^{\sum\limits_{j=1}^d 2(1-2H_j)-\kappa+d-1}dr,
\end{align*}
which by Lemma \ref{lem5} is infinite  when $d>1$ since  $\sum\limits_{j=1}^d2(1-2H_j)-\kappa+d-1\ge-1$ for $H_j\in(0,\frac12)$. \hfill
\end{proof}

\section{Moments of the solution and weak intermittency}\label{sec-moment}
In this section, we first obtain the lower bound and upper bound for the $p$-th moment of the solution to \eqref{swe} for $p\ge2$, and then deduce the weak intermittency. 

\begin{proposition}\label{prop1}
Under the conditions in Theorem \ref{thm1}, there exist  $0<C_1, C_2<\infty$ such that for all $p\ge2$
\begin{equation}\label{eq3.1}
C_1\exp\left(C_1t^{\frac{2\kappa H_0+2(\kappa-2)+4H}{3\kappa-4+4H}} \right) \le\|u(t,x)\|_p \le C_2\exp\left(C_2p^{\frac{\kappa}{3\kappa-4+4H}}t^{\frac{2\kappa H_0+2(\kappa-2)+4H}{3\kappa-4+4H}} \right)
\end{equation}
and
\begin{align}\label{eq3.1'}
{C_1}&\le\liminf_{t\to\infty} t^{-\frac{2\kappa H_0+2(\kappa-2)+4H}{3\kappa-4+4H}} \log \|u(t,x)\|_p \notag\\
&\le \limsup_{t\to\infty} t^{-\frac{2\kappa H_0+2(\kappa-2)+4H}{3\kappa-4+4H}} \log \|u(t,x)\|_p\le C_2\,p^{\frac{\kappa}{3\kappa-4+4H}}.
\end{align}
In particular, $\frac{2\kappa H_0+2(\kappa-2)+4H}{3\kappa-4+4H}=1$ if $H_0=\frac12.$
\end{proposition}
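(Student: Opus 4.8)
The plan is to establish the two-sided bound \eqref{eq3.1} first and then read off \eqref{eq3.1'} and the final identity from it. Throughout write $\theta=(1-\tfrac2\kappa)+\tfrac{2H}\kappa$ and $\gamma=\frac{2\kappa H_0+2(\kappa-2)+4H}{3\kappa-4+4H}$, and recall from the proof of Theorem \ref{thm1} the per-chaos estimate $n!\,\|g_n(\cdot,t,x)\|^2_{\H^{\otimes n}}\le C^n t^{n(2H_0+2\theta)}/(n!)^{2\theta+1}$ (its white-noise specialization being $H_0=\tfrac12$). For the upper bound I would exploit the chaos expansion $u(t,x)=\sum_{n\ge0}I_n(g_n(\cdot,t,x))$ together with Nelson's hypercontractivity, which for an element of the $n$-th chaos gives $\|I_n(g_n)\|_p\le(p-1)^{n/2}\|I_n(g_n)\|_2=(p-1)^{n/2}\big(n!\,\|g_n\|^2_{\H^{\otimes n}}\big)^{1/2}$ for every $p\ge2$. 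By the triangle inequality in $L^p(\Omega)$ and the per-chaos estimate,
\[\|u(t,x)\|_p\le\sum_{n\ge0}(p-1)^{n/2}\big(n!\,\|g_n\|^2_{\H^{\otimes n}}\big)^{1/2}\le\sum_{n\ge0}\frac{\big[(p-1)C\big]^{n/2}\,t^{n(H_0+\theta)}}{(n!)^{\theta+1/2}}.\]
Setting $\beta=\theta+\tfrac12=\frac{3\kappa-4+4H}{2\kappa}$ and applying the series bound \eqref{power-est} of Lemma \ref{lem22}, $\sum_n a^n/(n!)^\beta\le C\exp(Ca^{1/\beta})$ with $a=[(p-1)C]^{1/2}t^{H_0+\theta}$, produces $\|u(t,x)\|_p\le C\exp\big(C(p-1)^{1/(2\beta)}t^{(H_0+\theta)/\beta}\big)$. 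A direct computation gives $1/(2\beta)=\frac{\kappa}{3\kappa-4+4H}$ and $(H_0+\theta)/\beta=\gamma$, so bounding $p-1\le p$ yields the right-hand inequality in \eqref{eq3.1}.

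For the lower bound, since $\|u(t,x)\|_p\ge\|u(t,x)\|_2$ for $p\ge2$, it suffices to bound $\|u(t,x)\|_2^2=\sum_{n\ge0}n!\,\|g_n\|^2_{\H^{\otimes n}}\ge n!\,\|g_n\|^2_{\H^{\otimes n}}$ below for a single well-chosen $n$. The target is a matching per-chaos estimate $n!\,\|g_n\|^2_{\H^{\otimes n}}\ge c^n t^{n(2H_0+2\theta)}/(n!)^{2\theta+1}$: by Stirling's formula (\eqref{gamma-appr} of Lemma \ref{lem22}) the $n$-th term is maximized at $n^\ast\asymp t^{(2H_0+2\theta)/(2\theta+1)}=t^{\gamma}$ with maximal value of order $\exp(c\,t^\gamma)$, and keeping this one term gives $\|u(t,x)\|_2\ge C_1\exp(C_1 t^\gamma)$, i.e. the left-hand inequality in \eqref{eq3.1}. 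To produce the per-chaos lower bound I would, after the change of variables $\eta_j=\xi_1+\cdots+\xi_j$ used in Theorem \ref{thm1}, restrict the frequency domain to the sector where the signs of the $\eta_j$ alternate, so that $|\eta_j-\eta_{j-1}|=|\eta_j|+|\eta_{j-1}|\ge|\eta_j|$ and hence $\prod_j|\eta_j-\eta_{j-1}|^{1-2H}\ge\prod_j|\eta_j|^{1-2H}$; this decouples the spatial integral into a product of one-dimensional integrals which, by scaling, equal $c_0\,(s_{j+1}-s_j)^{2\theta}$ with $c_0\in(0,\infty)$ finite and positive by Lemma \ref{lem5} (here $\kappa>3-4H>2-2H$ is used). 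In the white case $H_0=\tfrac12$ every factor of the integrand is already nonnegative, so this restriction at once gives $n!\,\|g_n\|^2_{\H^{\otimes n}}\ge c^n\int_{[0,t]_<^n}\prod_j(s_{j+1}-s_j)^{2\theta}\,d\mathbf{s}$, which Lemma \ref{lem2} evaluates to the required Gamma expression, yielding $\gamma=1$.

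The genuinely hard part is the lower bound in the colored case $H_0\in(\tfrac12,1)$, and it is exactly the obstruction flagged in comment (b): since $\wh G_t(\xi)=\sin(t|\xi|^{\kappa/2})/|\xi|^{\kappa/2}$ changes sign, the bilinear temporal integrand $\prod_j\wh G_{s_{j+1}-s_j}(\eta_j)\,\wh G_{r_{j+1}-r_j}(\eta_j)$ is not pointwise nonnegative, so one cannot simply localize to the near-diagonal $\mathbf{r}\approx\mathbf{s}$ and bound below. Moreover the sharp power $(n!)^{2\theta+1}$ (as opposed to the weaker $(n!)^{2H_0+2\theta}$ produced by the diagonal term alone) forces one to retain the constructive interaction among the $n!$ time-orderings rather than discarding it. I would resolve this using Lemma \ref{lem6}: the positivity of the time-integrated Fourier transform of the Green's function guarantees that, after restricting frequencies to the region where the relevant $\wh G$-factors are controlled, the temporal integration against $\prod_j|s_j-r_j|^{2H_0-2}$ contributes a genuine positive amount of the correct order instead of being annihilated by cancellation; Lemma \ref{lem2} and the Stirling optimization above then close the estimate. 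Finally, \eqref{eq3.1'} follows from \eqref{eq3.1} by taking logarithms, dividing by $t^\gamma$ and letting $t\to\infty$ (the additive constants $\log C_i$ being negligible), and substituting $H_0=\tfrac12$ gives $\gamma=\frac{3\kappa-4+4H}{3\kappa-4+4H}=1$.
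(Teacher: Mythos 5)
Your upper bound coincides with the paper's argument (hypercontractivity $\|I_n(g_n)\|_p\le(p-1)^{n/2}\|I_n(g_n)\|_2$, Minkowski, the per-chaos bound from the proof of Theorem \ref{thm1}, then \eqref{power-est}), and your white-in-time lower bound (restriction to the alternating-sign sector, decoupling into one-dimensional integrals, scaling, Lemma \ref{lem2}) is also essentially what the paper does. The gap is the colored case $H_0\in(\frac12,1)$, which is the substantive part of the proposition and where your proposal stops at a statement of intent. Saying that Lemma \ref{lem6} ``guarantees that the temporal integration contributes a genuine positive amount of the correct order'' is asserting the conclusion rather than proving it, and ``Lemma \ref{lem2} and the Stirling optimization then close the estimate'' cannot work as written: once the weight $\prod_j|s_j-r_j|^{2H_0-2}$ has been disposed of, what remains is the square of the oscillatory integral $\int_{[0,t]_<^{n}}\prod_{j}\sin((s_{j+1}-s_j)|\eta_j|^{\kappa/2})\,d{\bf s}$ integrated against $\prod_j|\eta_j|^{1-2H-\kappa}$, and Lemma \ref{lem2} (a Dirichlet integral of products of powers) says nothing about it. Moreover, a low-frequency truncation of the kind your phrase ``restricting frequencies to the region where the relevant $\wh G$-factors are controlled'' suggests (i.e.\ $\sin x\ge cx$ for $|\eta_j|^{\kappa/2}\lesssim 1/t$) produces $((2n)!)^2$, i.e.\ $(n!)^4$ up to geometric factors, in the denominator, hence only $(n!)^3$ after multiplying by $n!$; this is strictly weaker than the required $(n!)^{2\theta+1}$ (your notation $\theta=1-\frac2\kappa+\frac{2H}\kappa$) and yields a growth exponent strictly smaller than the claimed one.

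The paper's lower bound rests on three steps, none of which appears in your sketch. (i) Lemma \ref{lem6}, applied after the substitution $u=|\eta|^{\kappa/2}$ with Hurst index $\theta\in(0,1)$, shows that for every fixed ${\bf s},{\bf r}$ the integral $\int_{\R_+}\sin((s_{j+1}-s_j)|\eta|^{\kappa/2})\sin((r_{j+1}-r_j)|\eta|^{\kappa/2})|\eta|^{1-2H-\kappa}\,d\eta$ is nonnegative; this pointwise positivity in $({\bf s},{\bf r})$ is what licenses replacing $\prod_j|s_j-r_j|^{2H_0-2}$ by its minimum $t^{n(2H_0-2)}$ and then factoring the double time integral into the perfect square $A_n(t)$ of \eqref{A}. (ii) The scaling identity \eqref{eq-scaling}, $A_n(t)=t^{4n(1-\frac{1-H}{\kappa})}A_n(1)$. (iii) The key device: randomize the horizon with $\tau\sim\mathrm{Exp}(1)$, apply Jensen's inequality to move the square outside the $dt$-integral, and evaluate the iterated Laplace transform exactly,
\[
\int_0^\infty e^{-t}\int_{[0,t]_<^{n}}\prod_{j=1}^n\sin((s_{j+1}-s_j)|\eta_j|^{\kappa/2})\,d{\bf s}\,dt=\prod_{j=1}^n\frac{|\eta_j|^{\kappa/2}}{1+|\eta_j|^{\kappa}}
\]
by Lemma \ref{lem8}; this gives $\E[A_n(\tau)]\ge c^n$, hence $A_n(1)\ge c^n/\Gamma(4n(1-\frac{1-H}{\kappa})+1)$, and Stirling (Lemma \ref{lem22}) then delivers the sharp per-chaos bound $c^nt^{n(2H_0+2\theta)}/(n!)^{2\theta+1}$ that you correctly identified as the target. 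Without step (iii), or some equivalent mechanism for extracting a lower bound of the correct order from the oscillatory ${\bf s}$-integral, the left inequality of \eqref{eq3.1} for $H_0>\frac12$ is not established.
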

\begin{proof}
We shall prove \eqref{eq3.1}  for $H_0\in(\frac12, 1)$. The proof for $H_0=\frac12$ is similar and thus omitted.

By \eqref{chaos}, we have
\begin{align*}
  &n! \|g_n(\cdot, t,x)\|^2_{\H^{\otimes n}}\notag\\
= &  n! \int_{\R^{n}} \int_{[0,t]^{2n}} \F g_n({\bf s},
  \cdot,t,x)(\boldsymbol{\xi})\overline{\F g_n({\bf r},\cdot, t,x)(\boldsymbol{\xi})} \prod_{j=1}^n |s_j-r_j|^{2H_0-2}d{\bf s}d{\bf r}  \boldsymbol {\mu} (d\boldsymbol{\xi})\\
  =&n!\int_{\R^{n}} \int_{([0,t]_<^{n})^2} \prod_{j=1}^n \frac{\sin((s_{j+1}-s_{j})|\xi_{1}+\dots+\xi_{j}|^{\kappa/2})}{|\xi_{1}+\dots+\xi_{j}|^{\kappa/2}} \\
  &\qquad \qquad \prod_{j=1}^n \frac{\sin((r_{j+1}-r_{j})|\xi_{1}+\dots+\xi_{j}|^{\kappa/2})}{|\xi_{1}+\dots+\xi_{j}|^{\kappa/2}} \prod_{j=1}^n |s_j-r_j|^{2H_0-2}d{\bf s}d{\bf r}  \boldsymbol {\mu} (d\boldsymbol{\xi})\\
  =&n!\int_{\R^{n}}\Bigg( \int_{([0,t]_<^{n})^2} \prod_{j=1}^n \frac{\sin((s_{j+1}-s_{j})|\eta_{j}|^{\kappa/2})}{|\eta_{j}|^{\kappa/2}}  \frac{\sin((r_{j+1}-r_{j})|\eta_{j}|^{\kappa/2})}{|\eta_{j}|^{\kappa/2}} \\
  &\qquad \qquad \qquad  \prod_{j=1}^n |s_j-r_j|^{2H_0-2} ~ d{\bf s}d{\bf r}\Bigg) ~ \prod_{j=1}^n|\eta_j-\eta_{j-1}|^{1-2H}d\boldsymbol{\eta}\\
  \ge &n!\int_{\mathbb D_n}\Bigg( \int_{([0,t]_<^{n})^2} \prod_{j=1}^n \frac{\sin((s_{j+1}-s_{j})|\eta_{j}|^{\kappa/2})}{|\eta_{j}|^{\kappa/2}}  \frac{\sin((r_{j+1}-r_{j})|\eta_{j}|^{\kappa/2})}{|\eta_{j}|^{\kappa/2}} \\
  &\qquad \qquad \qquad  \prod_{j=1}^n |s_j-r_j|^{2H_0-2} ~ d{\bf s}d{\bf r}\Bigg) ~ \prod_{j=1}^n|\eta_j|^{1-2H}d\boldsymbol{\eta},
    \end{align*}
    noting that in the last step we used the facts  that the inner integral with respect to $d{\bf s}d{\bf r}$ is nonnegative and that $|\eta_j-\eta_{j-1}|^{1-2H}\ge |\eta_j|^{1-2H}$ on $\mathbb D_n$ with $\mathbb D_n=\{(\eta_1, \dots, \eta_n)\in \R^n: \eta_1\ge0, \eta_2\le0, \eta_3\ge 0,\eta_4\le 0, \dots\}$.
  
  Now, we have 
  \begin{align}\label{g-A}
  &n! \|g_n(\cdot, t,x)\|^2_{\H^{\otimes n}}\notag\\
   \ge &\ n! \int_{([0,t]_<^{n})^2} \Bigg( \int_{\mathbb D_n} \prod_{j=1}^n \sin((s_{j+1}-s_{j})|\eta_{j}|^{\kappa/2}) \sin((r_{j+1}-r_{j})|\eta_{j}|^{\kappa/2}) \prod_{j=1}^n|\eta_j|^{1-2H-\kappa}d\boldsymbol{\eta} \Bigg)\notag\\
   &\qquad \qquad \qquad  \prod_{j=1}^n |s_j-r_j|^{2H_0-2} ~ d{\bf s}d{\bf r}\notag\\
  = &\ n! \int_{([0,t]_<^{n})^2} \Bigg(\prod_{j=1}^n \int_{\R_+}  \sin((s_{j+1}-s_{j})|\eta|^{\kappa/2}) \sin((r_{j+1}-r_{j})|\eta|^{\kappa/2}) |\eta|^{1-2H-\kappa}d\eta \Bigg)\notag\\
   &\qquad \qquad \qquad  \prod_{j=1}^n |s_j-r_j|^{2H_0-2} ~ d{\bf s}d{\bf r}\notag\\
     \ge &\ n! t^{n(2H_0-2)}  \int_{\R_+^n} \left|\prod_{j=1}^n\int_{[0,t]_<^{n}} \prod_{j=1}^n\sin((s_{j+1}-s_j)|\eta_j|^{\kappa/2}) d{\bf s}\right|^2\boldsymbol \prod_{j=1}^n|\eta_j|^{1-2H-\kappa} d\boldsymbol{\eta},
  \end{align}
  where the last step holds due to $|s_j-r_j|\le t$ and the fact that the integral with respect to $\eta$ is nonnegative by Lemma \ref{lem6}.

%
 { Let 
 \begin{equation}\label{A}
 A_n(t)=\int_{\R_+^n} \left|\int_{[0,t]_<^{n}} \prod_{j=1}^n\sin((s_{j+1}-s_j)|\eta_j|^{\kappa/2}) d{\bf s}\right|^2\boldsymbol \prod_{j=1}^n|\eta_j|^{1-2H-\kappa} d\boldsymbol{\eta}.
 \end{equation}
Make the change of variables $s_j'=s_j/t$ and $\eta_j'=\eta_jt^{2/\kappa}$, and we have the scaling 
\begin{equation}\label{eq-scaling}
A_n(t)=t^{4n(1-\frac{1-H}\kappa)} A_n(1).
\end{equation}
Now we estimate $\E[A_n(\tau)]$ where $\tau$ is an exponential random time with parameter $1$. By Fubini's Theorem and  Jensen's inequality, we obtain
\begin{align}\label{a-tau-1}
&\E[A_n(\tau)]=\int_0^\infty e^{-t} A_n(t) dt\notag\\
=&\int_{\R_+^n}\int_0^\infty e^{-t} \left|\int_{[0,t]_<^{n}} \prod_{j=1}^n\sin((s_{j+1}-s_j)|\eta_j|^{\kappa/2}) d{\bf s}\right|^2dt~~\boldsymbol \prod_{j=1}^n|\eta_j|^{1-2H-\kappa} d\boldsymbol{\eta}\notag\\
\ge & \int_{\R_+^n}\left|\int_0^\infty e^{-t} \int_{[0,t]_<^{n}} \prod_{j=1}^n\sin((s_{j+1}-s_j)|\eta_j|^{\kappa/2}) d{\bf s}dt\right|^2~~\boldsymbol \prod_{j=1}^n|\eta_j|^{1-2H-\kappa} d\boldsymbol{\eta}. 
\end{align}
 Applying the change of variables $r_j=s_{j+1}-s_{j}$ for $j=0, 1,\dots, n$ with the convention $s_0=0$ and $s_{n+1}=t$ and using Lemma \ref{lem8},  we have
\begin{align}\label{a-tau-2}
&\int_0^\infty e^{-t} \int_{[0,t]_<^{n}} \prod_{j=1}^n\sin((s_{j+1}-s_j)|\eta_j|^{\kappa/2}) d{\bf s}dt\notag\\
=&\int_{\R_+^{n+1}} e^{-(r_0+r_1+\dots+r_n)}  \prod_{j=1}^n\sin(r_j |\eta_j|^{\kappa/2}) dr_0dr_1\dots dr_n\notag\\
=&\prod_{j=1}^n \frac{|\eta_j|^{\kappa/2}}{1+|\eta_j|^\kappa}.
\end{align}
Now, combining \eqref{a-tau-1} and \eqref{a-tau-2}, we get
\begin{align*}
\E[A_n(\tau)]\ge \int_{\R_+^n } \prod_{j=1}^n \frac{|\eta_j|^{1-2H}}{(1+|\eta_j|^\kappa)^2}d\boldsymbol{\eta}=\left(\int_{\R_+} \frac{|\eta|^{1-2H}}{(1+|\eta|^\kappa)^2}d\eta\right)^n=c^n,
\end{align*}
where $c=\int_{\R_+} \frac{|\eta|^{1-2H}}{(1+|\eta|^\kappa)^2}d\eta\in (0,\infty)$.
Together with the scaling  property \eqref{eq-scaling}, we have 
\begin{align}\label{eq-A-1}
c^n\le \E[A_n(\tau)]= \E[\tau^{4n(1-\frac{1-H}{\kappa})}]A_n(1).
\end{align}
Therefore, it implies from \eqref{g-A}, \eqref{A}, \eqref{eq-scaling}, \eqref{eq-A-1} and the fact $\E[\tau^x]=\Gamma(x+1)$ that
\begin{align*}
n! \|g_n(\cdot, t,x)\|^2_{\H^{\otimes n}}\ge&\ n!t^{n(2H_0-2)} A_n(t)\notag\\
=&\ n!t^{n(2H_0-2)} t^{4n(1-\frac{1-H}{\kappa})}A_n(1)\\
\ge&\ C^n n!t^{n(2H_0-2)} t^{4n(1-\frac{1-H}{\kappa})}\frac1{\E[\tau^{4n(1-\frac{1-H}{\kappa})}]}\\
=&\ C^n t^{n(2H_0+2-\frac{4-4H}{\kappa})}\frac{n!}{\Gamma(4n(1-\frac{1-H}{\kappa})+1)}\\
\sim &\ \frac{C^n t^{n(2H_0+2-\frac{4-4H}{\kappa})}}{(n!)^{3-\frac{4(1-H)}{\kappa}}a^{an+\frac12} n^{\frac{1-a}{2}}}\ ,
\end{align*}
where the last step follows from \eqref{gamma-appr} in Lemma \ref{lem22} with $a=4\left(1-\frac{1-H}{\kappa}\right)$ and $b=1$.  Noting that there exists $\lambda>1$ such that $\lambda^{-n}\leq a^{an+\frac12} n^{\frac{1-a}{2}}\leq \lambda^n$, we obtain
\[
n! \|g_n(\cdot, t,x)\|^2_{\H^{\otimes n}}\ge\ \frac{C^n t^{n(2H_0+2-\frac{4-4H}{\kappa})}}{(n!)^{3-\frac{4(1-H)}{\kappa}}}.
\]
Therefore, applying \eqref{power-est} in Lemma \ref{lem22}, we have
\[
\Vert u(t,x)\Vert_p\geq \Vert u(t,x)\Vert_2\geq \left(\sum_{n=0}^\infty \frac{C^n t^{n(2H_0+2-\frac{4-4H}{\kappa})}}{(n!)^{3-\frac{4(1-H)}{\kappa}}}\right)^{\frac12}\geq C_1\exp{\left(C_1t^{\frac{2\kappa H_0+2(\kappa-2)+4H}{3\kappa-4+4H}}\right)},
\]
for some $C_1>0$.}

For the upper bound, noting that $\|I_n(g_n)\|_p\le (p-1)^{\frac n2}\|I_n(g_n)\|_2$  (see the last line on Page 62 in \cite{nualart}), by Minkowski's inequality and similar arguments in \eqref{eq-2.7}-\eqref{eq2.7} we have
\begin{align*}
\|u\|_p\le \sum_{n=0}^\infty \|I_n(g_n)\|_p&\le \sum_{n=0}^\infty (p-1)^{\frac n2} \|I_n(g_n)\|_2\\
&\le \sum_{n=0}^\infty  \frac{(p-1)^{\frac n2}C^{\frac n2}t^{n\left(H_0+\left(1-\frac2\kappa\right)+\frac{2H}\kappa\right)}}{(n!)^{\left[\left(1-\frac{2}{\kappa}\right)+\frac{2H}{\kappa}\right]+\frac12}}\\
& \le C\exp\left(Cp^{\frac{\kappa}{3\kappa-4+4H}} t^{\frac{2\kappa H_0+2(\kappa-2)+4H}{3\kappa-4+4H}}\right),
\end{align*}
and hence, we get
\[
\|u\|_p\le C_2\exp\left(C_2p^{\frac{\kappa}{3\kappa-4+4H}} t^{\frac{2\kappa H_0+2(\kappa-2)+4H}{3\kappa-4+4H}}\right),
\]
for some $C_2>0$. 
\hfill
\end{proof}

\begin{remark}\label{remark2}
  
 Note that the lower and upper bounds (lower bounds, resp.) for the $p$-th moment  were studied  in \cite{dm09} (in \cite{bc16}, resp.) based on the probabilistic representation for the second moment of the solution to wave equations obtained in \cite{dmt}. 
\end{remark}

Recall that the lower  Lyapunov exponent $L_l(p)$ and the upper Lyapunov exponent $L_u(p)$ of order $p\ge 2$ of the solution $u(t,x)$  are defined, respectively by 
\[L_l(p)=\liminf_{t\to\infty} \frac1{R(t)} \inf_{x\in\R} \log \E[|u(t,x)|^p]\]
and 
\[L_u(p)=\limsup_{t\to\infty} \frac1{R(t)} \sup_{x\in\R} \log \E[|u(t,x)|^p]\]
for some positive function $R(t).$ If $L_l(2)>0$ and $L_u(p)<\infty$ for all $p\ge 0$, we say that $u(t,x)$ possesses the weak intermittency. Heuristically speaking, if a process $u(t,x)$ is weakly intermittent, it concentrates  on a few of very high peaks (see, e.g., \cite{davar} and the references therein).  Taking $R(t)=t^{\frac{2\kappa H_0+2(\kappa-2)+4H}{3\kappa-4+4H}}$, the Proposition below follows directly from Proposition \ref{prop1}.
\begin{proposition}
Under the conditions in Theorem \ref{thm1}, the solution $u(t,x)$ to \eqref{swe} is weakly intermittent. 
\end{proposition}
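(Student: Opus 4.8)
The plan is to deduce the statement directly from the two-sided moment estimate \eqref{eq3.1'} of Proposition \ref{prop1}, since weak intermittency requires only that $L_l(2)>0$ and that $L_u(p)<\infty$ for all $p\ge 0$. The first point I would record is that the moments of the solution are independent of the spatial variable $x$. Indeed, in the chaos expansion the variable $x$ enters $\F g_n({\bf s},\cdot,t,x)(\boldsymbol\xi)$ only through the unimodular factor $e^{-ix(\xi_1+\cdots+\xi_n)}$, which cancels against its conjugate when one forms $\|g_n(\cdot,t,x)\|^2_{\H^{\otimes n}}$ via \eqref{chaos}; hence $\E[|u(t,x)|^p]$ does not depend on $x$, and the $\inf_{x\in\R}$ and $\sup_{x\in\R}$ appearing in the definitions of $L_l(p)$ and $L_u(p)$ may be dropped.

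Next I would translate everything into the $L^p(\Omega)$-norms used in Proposition \ref{prop1}. Writing $\log\E[|u(t,x)|^p]=p\log\|u(t,x)\|_p$ and recalling the normalization $R(t)=t^{\frac{2\kappa H_0+2(\kappa-2)+4H}{3\kappa-4+4H}}$, the exponents become $L_l(p)=p\,\liminf_{t\to\infty}R(t)^{-1}\log\|u(t,x)\|_p$ and $L_u(p)=p\,\limsup_{t\to\infty}R(t)^{-1}\log\|u(t,x)\|_p$, which are precisely the quantities bounded in \eqref{eq3.1'}.

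For the lower bound I would take $p=2$ in the left inequality of \eqref{eq3.1'}, which gives $\liminf_{t\to\infty}R(t)^{-1}\log\|u(t,x)\|_2\ge C_1$ and hence $L_l(2)\ge 2C_1>0$. For the upper bound, the right inequality of \eqref{eq3.1'} yields, for every $p\ge2$,
\[
L_u(p)=p\,\limsup_{t\to\infty}R(t)^{-1}\log\|u(t,x)\|_p\le C_2\,p^{\,1+\frac{\kappa}{3\kappa-4+4H}}<\infty,
\]
while for $0\le p\le 2$ the finiteness of $L_u(p)$ follows from Jensen's inequality $\E[|u(t,x)|^p]\le\big(\E[|u(t,x)|^2]\big)^{p/2}$, giving $L_u(p)\le\frac{p}{2}L_u(2)$. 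Together these establish $L_l(2)>0$ and $L_u(p)<\infty$ for all $p\ge0$, which is exactly weak intermittency.

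Since the result is an immediate corollary of Proposition \ref{prop1}, I do not expect a genuine obstacle. The only step requiring a word of justification is the spatial independence of the moments, which legitimizes suppressing the infimum and supremum over $x$ in the Lyapunov exponents; everything else is bookkeeping with the constants $C_1,C_2$ already produced in \eqref{eq3.1'}.
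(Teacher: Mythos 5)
Your proposal is correct and follows the same route as the paper, which simply observes that the proposition follows directly from Proposition \ref{prop1} with $R(t)=t^{\frac{2\kappa H_0+2(\kappa-2)+4H}{3\kappa-4+4H}}$; you have merely supplied the routine bookkeeping (the identity $\log\E[|u|^p]=p\log\|u\|_p$, the uniformity in $x$, and Jensen for $0\le p\le 2$) that the paper leaves implicit. Note that the suppression of $\inf_x$ and $\sup_x$ needs only the uniformity in $x$ of the constants in \eqref{eq3.1}, which is slightly weaker than the full $x$-independence of the $p$-th moments you invoke, but either justification suffices.
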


\section{H\"older continuity}\label{sec-continuity}
In this section, the H\"older continuity in time and space for the solution $u(t,x)$ to the SWE \eqref{swe} is obtained in Proposition \ref{prop2}.  The result is consistent with the H\"older continuity for SWEs with noise that is not rough in space obtained in  \cite{balan12}, \cite{bs17}, and \cite{cd08}. Note that exponents of H\"older continuity in both time and space are independent of the temporal covariance function $f_0(t)$. Similar phenomenon occurs also for SHEs (see \cite[Theorem 3.2]{bqs}).

\begin{proposition} \label{prop2}
Assume the same conditions as in  Theorem \ref{thm1}. 
Then on any  set $[0,T]\times \mathbb M$ where $\mathbb M\subset \R$ is a compact set,  $u(t,x)$ has a modification which is $\theta_1$-H\"older continuous  in time for all $\theta_1\in(0, 1-\frac2\kappa+\frac{2H}{\kappa})$ and $\theta_2$-H\"older continuous in space for all $\theta_2\in(0, H+\frac\kappa2-1)$. 

In particular, when $\kappa=2,$ the solution has a version that is $\theta$-H\"oder continuous both in time and in space for all $\theta\in(0, H).$
\end{proposition}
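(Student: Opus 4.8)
The plan is to establish, for every $p\ge 2$ and every $\theta_1\in(0,1-\frac2\kappa+\frac{2H}\kappa)$, $\theta_2\in(0,H+\frac\kappa2-1)$, the two-parameter moment bound
\begin{equation*}
\|u(t,x)-u(s,y)\|_p\le C_{p,T}\big(|t-s|^{\theta_1}+|x-y|^{\theta_2}\big),\qquad (t,x),(s,y)\in[0,T]\times\mathbb M,
\end{equation*}
and then to invoke the anisotropic Kolmogorov continuity criterion; letting $p\to\infty$ pushes the attainable Hölder exponents arbitrarily close to the stated thresholds, and when $\kappa=2$ both thresholds equal $H$. Since $u=\sum_n I_n(g_n)$, the increment equals $\sum_{n\ge1}I_n(g_n(\cdot,t,x)-g_n(\cdot,s,y))$, so by Gaussian hypercontractivity $\|I_n(h)\|_p\le(p-1)^{n/2}(n!\,\|h\|^2_{\H^{\otimes n}})^{1/2}$ and Minkowski's inequality it suffices to bound $n!\,\|g_n(\cdot,t,x)-g_n(\cdot,s,y)\|^2_{\H^{\otimes n}}$ by $C^n(n!)^{-\gamma}$ (for some $\gamma>0$ surviving the extra $(p-1)^{n/2}$) times $|x-y|^{2\theta_2}$ or $|t-s|^{2\theta_1}$. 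A triangle inequality splits this into a pure spatial increment $g_n(\cdot,t,x)-g_n(\cdot,t,y)$ and a pure temporal increment $g_n(\cdot,t,y)-g_n(\cdot,s,y)$.

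For the spatial increment I would use that, after the change of variables $\eta_j=\xi_1+\cdots+\xi_j$ of Theorem \ref{thm1}, the sole $x$-dependence of $\F g_n$ is the phase $e^{-ix\eta_n}$, so the Fourier transform of the increment carries the factor $e^{-ix\eta_n}-e^{-iy\eta_n}$, whose square equals $4\sin^2(\frac{(x-y)\eta_n}{2})\le C|x-y|^{2\theta_2}|\eta_n|^{2\theta_2}$. This inserts one extra weight $|\eta_n|^{2\theta_2}$ into the single $j=n$ spectral integral while leaving the other $n-1$ factors exactly as in \eqref{eq-2.6}. Rerunning the estimate of Theorem \ref{thm1}, the only new requirement is finiteness, via Lemma \ref{lem5}, of $\int_\R\frac{\sin^2\eta}{\eta^2}|\eta|^{\frac2\kappa[\alpha_n(1-2H)+2\theta_2]+\frac2\kappa-1}d\eta$; since $\alpha_n\in\{0,1\}$, the worst case $\alpha_n=1$ forces precisely $\theta_2<H+\frac\kappa2-1$. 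Because only one of the $n$ factors is altered, a single Beta exponent in the Dirichlet integral of Lemma \ref{lem2} shifts by the bounded amount $\frac{4\theta_2}\kappa$, so the Gamma asymptotics of Lemma \ref{lem22} change only by a factor absorbable into $\lambda^n$; the super-factorial decay $C^n(n!)^{-\gamma}$ is preserved and the series converges.

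The temporal increment is the main obstacle, since $t$ enters $\F g_n$ both through the top sine factor $\sin((t-s_n)|\eta_n|^{\kappa/2})$ and through the upper limit $s_{n+1}=t$ of the time simplex. On the region $\{s_n<s\}$ the two kernels differ only in the top factor, and the identity $\sin A-\sin B=2\cos(\frac{A+B}2)\sin(\frac{A-B}2)$ with $A=(t-s_n)|\eta_n|^{\kappa/2}$, $B=(s-s_n)|\eta_n|^{\kappa/2}$ bounds this difference by $2|\sin(\frac{(t-s)|\eta_n|^{\kappa/2}}2)|$, a factor independent of $s_n$; its square, integrated against $|\eta_n|^{\alpha_n(1-2H)-\kappa}d\eta_n$, scales (by the substitution of Theorem \ref{thm1} and Lemma \ref{lem5}, whose integrability is already guaranteed by $\kappa>3-4H$) as $C|t-s|^{\beta_n}$ with $\beta_n=2-\frac2\kappa-\frac2\kappa\alpha_n(1-2H)$. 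The residual $(n-1)$-fold space-time integral is of the Theorem \ref{thm1} type and yields the factorial decay; on the mismatch region $\{s<s_n<t\}$, where only $g_n(\cdot,t,y)$ survives, the same scaling produces an even higher power $|t-s|^{\beta_n+1}$, which is dominated. Since $\alpha_n\in\{0,1\}$, the smallest exponent is $\beta_n=2-\frac{4-4H}\kappa=2(1-\frac2\kappa+\frac{2H}\kappa)$ at $\alpha_n=1$, whence $n!\,\|g_n(\cdot,t,y)-g_n(\cdot,s,y)\|^2_{\H^{\otimes n}}\le C^n(n!)^{-\gamma}|t-s|^{2\theta_1}$ for every $\theta_1<1-\frac2\kappa+\frac{2H}\kappa$, using $|t-s|\le T$. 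The delicate point here is the bookkeeping of the coupled variable $s_n$ and the domain mismatch, which I would carry out exactly as in the two regimes of Theorem \ref{thm1}.

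Finally, the colored-in-time case $H_0\in(\frac12,1)$ is handled by inserting the two decompositions above inside the $2H_0$-norm reduction (Lemma B.3 of \cite{bc16}) used in \eqref{eq2.4}, so that the extra weights $|\eta_n|^{2\theta_2}$ and $\sin^2(\frac{(t-s)|\eta_n|^{\kappa/2}}2)$ again localize to the $j=n$ factor and produce the same exponent thresholds, now with a harmless $\frac1{2H_0}$-power in the Beta exponents; the factorial decay is unchanged. Collecting the spatial and temporal bounds gives the displayed moment inequality, and the Kolmogorov criterion then yields the asserted $\theta_1$-Hölder modification in time and $\theta_2$-Hölder modification in space, with common exponent $\theta<H$ when $\kappa=2$.
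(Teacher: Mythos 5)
Your proposal is correct and follows essentially the same route as the paper: hypercontractivity plus Minkowski's inequality reduce everything to chaos-by-chaos bounds, the spatial increment is controlled through the phase factor $e^{-ix\eta_n}-e^{-iy\eta_n}$ attached to the $j=n$ spectral integral, and the temporal increment is split into a common-domain part and a simplex-mismatch part, which is exactly the paper's $A_n(t,h)$/$B_n(t,h)$ decomposition, with the same integrability conditions producing the same thresholds $\theta_2<H+\frac{\kappa}{2}-1$ and $\theta_1<1-\frac{2}{\kappa}+\frac{2H}{\kappa}$. The only differences are cosmetic: you bound the oscillatory increments by $|\sin u|\le |u|^{\theta}$-type inequalities and pure scaling where the paper invokes its Lemmas \ref{lem3} and \ref{lem66}.
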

\begin{proof}
First we show the H\"older continuity in space.
Noting that $\|I_n(g_n)\|_p\le (p-1)^{\frac n2}\|I_n(g_n)\|_2$, by Minkowski's inequality, we have 
\begin{align*}
&\|u(t,x+z)-u(t,x)\|_p\\
 \le&{ \sum_{n=1}^\infty} (p-1)^{\frac n2} (n!)^{\frac12}\| g_n(\cdot, t, x+z)-g_n(\cdot, t, x)\|_{\H^{\otimes n}}\\
 =& { \sum_{n=1}^\infty} (p-1)^{\frac n2} \bigg(n! \int_{\R^{n}} \int_{[0,t]^{2n}} \F \Big[g_n({\bf s},
  \cdot,t,x+z)(\boldsymbol{\xi})-g_n({\bf s},
  \cdot,t,x)(\boldsymbol{\xi})\Big]\\
  &\qquad\qquad  \overline{\F\Big[ g_n({\bf r},\cdot, t,x+z)(\boldsymbol{\xi}) -g_n({\bf r},\cdot, t,x)(\boldsymbol{\xi})\Big]} \prod_{j=1}^n |s_j-r_j|^{2H_0-2}d{\bf s}d{\bf r}  \boldsymbol {\mu}(d\boldsymbol{\xi}) \bigg)^\frac12\\
 \le & { \sum_{n=1}^\infty} (p-1)^{\frac n2}(n!)^{H_0-\frac12} \bigg( \int_{[0,t]_<^{n}}  \Big[ \int_{\R^n} \prod_{j=1}^n\frac{\sin^2((s_{j+1}-s_j)|\xi_1+\dots+\xi_j|^{\kappa/2})}{|\xi_1+\dots+\xi_j|^\kappa}|\xi_j|^{1-2H} \\
 &\qquad \qquad\qquad \qquad  \qquad \qquad \left|1-e^{-iz(\xi_1+\dots+\xi_n)}\right|^2 d {\boldsymbol \xi}\Big]^{\frac1{2H_0}}d{\bf s}  \bigg)^{H_0}\\
=& { \sum_{n=1}^\infty}  (p-1)^{\frac n2}(n!)^{H_0-\frac12} \bigg( \int_{[0,t]_<^{n}}  \Big[ \int_{\R^n} \prod_{j=1}^n\frac{\sin^2((s_{j+1}-s_j)|\eta_j|^{\kappa/2})}{|\eta_j|^\kappa}|\eta_j-\eta_{j-1}|^{1-2H} \\
 &\qquad \qquad\qquad \qquad  \qquad \qquad \left|1-e^{-iz\eta_n}\right| d {\boldsymbol \eta}\Big]^{\frac1{2H_0}}d{\bf s}  \bigg)^{H_0}.
\end{align*}
Now, by changing of variables, we have
 \begin{align}\label{holder-x-1}
& \int_{[0,t]_<^{n}}  \bigg[ \int_{\R^n} \prod_{j=1}^n\frac{\sin^2((s_{j+1}-s_j)|\eta_j|^{\kappa/2})}{|\eta_j|^\kappa}|\eta_j-\eta_{j-1}|^{1-2H} \left|1-e^{-iz\eta_n}\right| d {\boldsymbol \eta}\bigg]^{\frac1{2H_0}}d{\bf s} d{\bf t} \notag\\
 \le & \int_{[0,t]_<^{n}}  \bigg[ \sum_{\alpha\in \mathcal A_n}\int_{\R^n} \prod_{j=1}^n\frac{\sin^2((s_{j+1}-s_j)|\eta_j|^{\kappa/2})}{|\eta_j|^\kappa}|\eta_j|^{\alpha_j(1-2H)} (|z\eta_n|\wedge 2) d {\boldsymbol \eta}\bigg]^{\frac1{2H_0}}d{\bf s} d{\bf t} \notag\\
\le & \int_{[0,t]_<^{n}}\sum_{\alpha\in\mathcal A_n} \left(\frac{2}{\kappa}\right)^{\frac{n}{2H_0}}\Bigg[ \prod_{j=1}^{n-1}  (s_{j+1}-s_j)^{\frac1{2H_0}[2-\frac2\kappa-\frac2\kappa   \alpha_j(1-2H)]} {\left(\int_{\R}\frac{\sin^2(\eta)}{\eta^2} |\eta|^{\frac{2}{\kappa}\alpha_j(1-2H)+\frac{2}{\kappa}-1}d\eta\right)^{\frac1{2H_0}}}\notag\\
    &\qquad \qquad \qquad {\left(\int_{\R}\frac{\sin^2((t-s_n)y)}{y^2} |y|^{\frac{2}{\kappa}\alpha_n(1-2H)+\frac{2}{\kappa}-1}((|z||y|^{\frac{2}{\kappa}})\wedge 2)dy\right)^{\frac1{2H_0}}\Bigg]d{\bf s}}.
 \end{align}
 Recall that $\alpha_n\in\{0, 1\}$, and by Lemma \ref{lem3} with $\lambda=\frac2\kappa$, $\beta=\frac2\kappa(\alpha_n(1-2H)+1)-1,\gamma=0$ for $\alpha_n=1$ and $\gamma=\frac{1-2H}{\kappa}$ for $\alpha_n=0$, we have 
 \begin{align*}\label{holder-x-2}
\int_{\R}\frac{\sin^2((t-s_n)y)}{y^2} |y|^{\frac{2}{\kappa}\alpha_n(1-2H)+\frac{2}{\kappa}-1}((|z||y|^{\frac2\kappa})\wedge 2)dy&\le C(1\vee (t-s_n)^{\frac{2(1-2H)}{\kappa}})|z|^{2H+\kappa-2}\notag \\
&{\le C|z|^{2H+\kappa-2}.}
 \end{align*} 
 {Thus, there exists a positive constant $C$ depending only on $(p,H_0, H, T)$ such that 
 \[
 \|u(t,x+z)-u(t,x)\|_p\leq  \sum_{n=1}^\infty C^{n} (n!)^{H_0-\frac12}  \left(\int_{[0,t]_<^{n}}\sum_{\alpha\in\mathcal A_n}\prod_{j=1}^{n-1}(s_{j+1}-s_j)^{\frac1{2H_0}[2-\frac2\kappa-\frac2\kappa   \alpha_j(1-2H)]}d{\bf s}\right)^{H_0}.
 \]
 
When $n=1$, the integral on the right-hand side of the above inequality equals $t$.

When $n\geq 2$, for each fixed $\alpha\in \mathcal A_n$, denote $
\beta_j=\frac1{2H_0}\left[2-\frac2\kappa-\frac2\kappa   \alpha_j(1-2H)\right],\  j=1, \dots, n$. Noting that $\alpha_n\in \{0,1\}$ and $\sum_{j=1}^n\alpha_j=n$, we have
\begin{equation}\label{beta-n-1}
\sum_{j=1}^{n-1}\beta_j=
\begin{cases}
\frac{(\kappa-2+2H)n}{H_0\kappa}-\frac{\kappa-2+2H}{H_0\kappa}, \mbox{ when } \alpha_n=1,\\
\frac{(\kappa-2+2H)n}{H_0\kappa}-\frac{\kappa-1}{H_0\kappa}, \mbox{ when } \alpha_n=0.
\end{cases}
\end{equation}
 Using Lemma \ref{lem2}, we obtain
\begin{align*}
\int_{[0,t]_<^{n}}\prod_{j=1}^{n-1}  (s_{j+1}-s_j)^{\beta_j}d{\bf s}=&\ \int_0^t\int_{0<s_1<\dots<s_n}\prod_{j=1}^{n-1}  (s_{j+1}-s_j)^{\beta_j}d{\bf s}\\
=&\ \frac{\prod_{j=1}^{n-1}\Gamma(1+\beta_j)}{\Gamma(\beta_1+\dots+\beta_{n-1}+n)}\int_0^ts_n^{\beta_1+\dots+\beta_{n-1}+n-1}ds_n\\
=&\ \frac{\prod_{j=1}^{n-1}\Gamma(1+\beta_j)}{(\beta_1+\dots+\beta_{n-1}+n)\Gamma(\beta_1+\dots+\beta_{n-1}+n)}t^{\beta_1+\dots+\beta_{n-1}+n}\\
=&\ \frac{\prod_{j=1}^{n-1}\Gamma(1+\beta_j)}{\Gamma(\beta_1+\dots+\beta_{n-1}+n+1)}t^{\beta_1+\dots+\beta_{n-1}+n}.
\end{align*}
Then applying \eqref{gamma-appr} in Lemma \ref{lem22} with $a=1+\frac{\kappa-2+2H}{H_0\kappa}\in(1,2)$ and either $b=1-\frac{\kappa-2+2H}{H_0\kappa}\in(0,1)$ or $b=1-\frac{\kappa-1}{H_0\kappa}\in[0,1)$, we have
\begin{align*}
\int_{[0,t]_<^{n}}\prod_{j=1}^{n-1}  (s_{j+1}-s_j)^{\beta_j}d{\bf s}\leq&\ \frac{C^{n-1}}{\Gamma(\beta_1+\dots+\beta_{n-1}+n+1)}t^{\beta_1+\dots+\beta_{n-1}+n}\\
=&\ \frac{C^{n-1}}{\Gamma(an+b)}t^{a(n-1)+\frac{\kappa-2+2H}{H_0\kappa}+b}\\
\sim&\ \frac{C^{n-1}}{(n!)^aa^{an+b-\frac12}n^{b-\frac12-\frac{a}{2}}}t^{a(n-1)+\frac{\kappa-2+2H}{H_0\kappa}+b}\\
\leq&\ \frac{C^{n-1}T^{\frac{\kappa-2+2H}{H_0\kappa}+b}t^{a(n-1)}}{(n!)^a}.
\end{align*}
 Therefore, there exists a positive constant $C$ depending only on $(p,H_0, H, T)$ such that 
\begin{align}\label{holder-x-3}
\|u(t,x+z)-u(t,x)\|_p
\le & |z|^{H+\frac{\kappa}{2}-1}\left((p-1)^{\frac12}Ct+\sum_{n=2}^\infty \frac{(p-1)^{\frac n2} C^{n-1}t^{\frac{(H_0\kappa+\kappa-2+2H)(n-1)}{\kappa}}}{(n!)^{\frac{3\kappa-4+4H}{2\kappa}}}\right)\notag\\
\leq &\ C |z|^{H+\frac{\kappa}{2}-1}\sum_{n=0}^\infty \frac{C^nt^{\frac{(H_0\kappa+\kappa-2+2H)n}{\kappa}}}{(n!)^{\frac{3\kappa-4+4H}{2\kappa}}}\leq C |z|^{H+\frac{\kappa}{2}-1}.
\end{align}}
Then the $\theta_2$-H\"older continuity for $\theta_2\in(0, H+\frac\kappa2-1)$ follows from the Kolmogorov's continuity criterion. 

Now we consider the H\"older continuity in time. 
\begin{align}\label{eq4.1}
\|u(t+h, x)-u(t, x)\|_p
\le& { \sum_{n=1}^\infty}(p-1)^{\frac n2}\|g_n(\cdot, t+h, x)-g_n(\cdot, t,x)\|_{\H^{\otimes n}}\notag\\
{\le}&  \sum_{n=1}^\infty(p-1)^{\frac n2}(n!)^{\frac12} \left[ \sqrt{A_n(t,h)}+\sqrt{B_n(t,h)}\right],
\end{align}
where
\[A_n(t,h)=\|g_n(\cdot, t+h, x)I_{[0,t]^n}-g_n(\cdot, t, x)\|^2_{\H^{\otimes n}}\]
and
\[B_n(t,h)=\|g_n(\cdot, t+h, x)I_{[0,t+h]^n\backslash [0,t]^n}\|^2_{\H^{\otimes n}}.\]
For $A_n(t,h)$, we have
\begin{align*}
&A_n(t,h)=\|g_n(\cdot, t+h, x)I_{[0,t]^n}-g_n(\cdot, t, x)\|^2_{\H^{\otimes n}}\\
=&\int_{\R^{n}} \int_{[0,t]^{2n}} \F \Big[g_n({\bf s},
  \cdot,t+h,x)(\boldsymbol{\xi})-g_n({\bf s},
  \cdot,t,x)(\boldsymbol{\xi})\Big]\\
  &\qquad\qquad  \overline{\F\Big[ g_n({\bf r},\cdot, t+h,x)(\boldsymbol{\xi}) -g_n({\bf r},\cdot, t,x)(\boldsymbol{\xi})\Big]} \prod_{j=1}^n |s_j-r_j|^{2H_0-2}d{\bf s}d{\bf r}  \boldsymbol {\mu}(d\boldsymbol{\xi})\\
  \le&(n!)^{2H_0-2} \Bigg(\int_{[0,t]_<^{n}}  \Bigg[ \int_{\R^n} \prod_{j=1}^{n-1}\frac{\sin^2((s_{j+1}-s_j)|\xi_1+\dots+\xi_j|^{\kappa/2})}{|\xi_1+\dots+\xi_j|^\kappa}\prod_{j=1}^n|\xi_j|^{1-2H} \\
 & \qquad \qquad  \Bigg|\frac{\sin((t+h-s_n)|\xi_1+\dots+\xi_n|^{\kappa/2})}{|\xi_1+\dots+\xi_n|^{\kappa/2}} - \frac{\sin((t-s_n)|\xi_1+\dots+\xi_n|^{\kappa/2})}{|\xi_1+\dots+\xi_n|^{\kappa/2}}  \Bigg|^2d {\boldsymbol \xi}\Bigg]^{\frac1{2H_0}}d{\bf s} \Bigg)^{2H_0}\\
 =&(n!)^{2H_0-2} \Bigg(\int_{[0,t]_<^{n}}  \Bigg[ \int_{\R^n} \left(\prod_{j=1}^{n-1}\frac{\sin^2((s_{j+1}-s_j)|\eta_j|^{\kappa/2})}{|\eta_j|^\kappa}\right)\prod_{j=1}^n|\eta_j-\eta_{j-1}|^{1-2H} \\
 & \qquad\qquad  \qquad \frac{|\sin((t+h-s_n)|\eta_n|^{\kappa/2})-\sin((t-s_n)|\eta_n|^{\kappa/2})|^2}{|\eta_n|^\kappa} d {\boldsymbol \eta}\Bigg]^{\frac1{2H_0}}d{\bf s} \Bigg)^{2H_0}\\
 \le &(n!)^{2H_0-2} \Bigg(\int_{[0,t]_<^{n}}  \sum_{\alpha\in\mathcal A_n}\Bigg[ \int_{\R^n} \left(\prod_{j=1}^{n-1}\frac{\sin^2((s_{j+1}-s_j)|\eta_j|^{\kappa/2})}{|\eta_j|^\kappa}\right)\prod_{j=1}^n|\eta_j|^{\alpha_j(1-2H)}\\
 & \qquad \qquad \qquad \frac{C^2_\gamma((|h|^{2\gamma}|\eta_n|^{\gamma\kappa})\wedge (|h|^2|\eta_n|^\kappa))}{|\eta_n|^\kappa} d {\boldsymbol \eta}\Bigg]^{\frac1{2H_0}}d{\bf s}  \Bigg)^{2H_0},
\end{align*}
where the last step follows from Lemma \ref{lem66} with $\gamma\in(0, 1-\frac2\kappa+\frac{2H}{\kappa})$. Note that when $\gamma\in(0, 1-\frac2\kappa+\frac{2H}{\kappa})$, 
\[\int_{\R} \frac{((|h|^{2\gamma}|\eta_n|^{\gamma\kappa})\wedge (|h|^2|\eta_n|^\kappa))}{|\eta_n|^\kappa} |\eta_n|^{\alpha_n(1-2H)} d\eta_n \leq C(|h|^2+|h|^{2\gamma}) \text{ for } \alpha_n\in\{0,1\} \]  and following the approach { in the analysis of \eqref{holder-x-1}-\eqref{holder-x-3}}, we can show that  
\begin{equation}\label{eq4.2}
\sum_{n=1}^\infty(p-1)^{\frac n2}(n!)^{\frac12}  \sqrt{A_n(t,h)}\le  C (|h|+|h|^{\gamma}){ \leq C|h|^{\gamma}}
\end{equation}
for $\gamma\in(0, 1-\frac2\kappa+\frac{2H}{\kappa})$ with $C$ depending on $ (p,\kappa, H_0, H, T, \mathbb M, \gamma)$. 

Now we consider the term $B_n(t,h)$.  Denote $E_{t,h}=[0,t+h]^n\backslash [0,t]^n$, and then
{\[
E_{t,h}=\bigcup_{\rho\in\mathcal P_n}\Big\{(s_1, \dots, s_n): s_{\rho(1)}\le s_{\rho(2)}\le \cdots\le s_{\rho(n)}, t<s_{\rho(n)}\le t+h \Big\}.
\]}
Therefore, we have
\begin{align*}
&B_n(t,h)=\|g_n(\cdot, t+h, x)I_{[0,t+h]^n\backslash [0,t]^n}\|^2_{\H^{\otimes n}}\\
=&\,\int_{\R^{n}} \int_{[0,t+h]^{2n}} \F \Big[g_n({\bf s},
  \cdot,t+h,x)(\boldsymbol{\xi})\Big] \overline{\F\Big[ g_n({\bf r},\cdot, t+h,x)(\boldsymbol{\xi})\Big]} \\
  &\qquad \qquad I_{E_{t, h}}({\bf s})I_{E_{t,h}}({\bf r})\prod_{j=1}^n |s_j-r_j|^{2H_0-2}d{\bf s}d{\bf r}  \boldsymbol {\mu}(d\boldsymbol{\xi})\\
 \le & (n!)^{-2}
  \Bigg( \sum_{\rho\in\mathcal P_n}\int_t^{t+h} \int_{0<s_{\rho(1)}<s_{\rho(2)}<\cdots<s_{\rho(n)}} \bigg[ \int_{\R^n} \prod_{j=1}^{n}\frac{\sin^2((s_{\rho(j+1)}-s_{\rho(j)})|\xi_{\rho(1)}+\dots+\xi_{\rho(j)}|^{\kappa/2})}{|\xi_{\rho(1)}+\dots+\xi_{\rho(j)}|^\kappa}\\
  &\qquad \qquad \qquad\qquad  \prod_{j=1}^n|\xi_j|^{1-2H} d{\boldsymbol \xi}\bigg]^{\frac1{2H_0}}d{\bf s} \Bigg)^{2H_0}\\
  = &\, (n!)^{-2}
  \Bigg(n!\int_t^{t+h} \int_{0<s_1<s_2<\cdots<s_n} \bigg[ \int_{\R^n} \prod_{j=1}^{n}\frac{\sin^2((s_{j+1}-s_j)|\xi_1+\dots+\xi_j|^{\kappa/2})}{|\xi_1+\dots+\xi_j|^\kappa}\prod_{j=1}^n|\xi_j|^{1-2H} d{\boldsymbol \xi}\bigg]^{\frac1{2H_0}}d{\bf s} \Bigg)^{2H_0}\\
=&\, {(n!)^{2H_0-2} \Bigg(\int_t^{t+h} \int_{0<s_1<s_2<\cdots<s_n} \sum_{\alpha\in\mathcal A_n}\left(\frac{2}{\kappa}\right)^{\frac{n}{2H_0}} \prod_{j=1}^n  (s_{j+1}-s_j)^{\frac1{2H_0}[2-\frac2\kappa-\frac2\kappa   \alpha_j(1-2H)]}}\\
&{\qquad\qquad \qquad  \left(\int_{\R}\frac{\sin^2(\eta)}{\eta^2} |\eta|^{\frac2\kappa\alpha_j(1-2H)+\frac2\kappa-1}d\eta\right)^{\frac1{2H_0}}d{\bf s}\Bigg)^{2H_0}}
\end{align*} 
where by convention $s_{j+1}=t+h$.

{ Note that in the case $n=1$ we have
\[
B_n(t,h)\leq \frac{2}{\kappa}\left(\int_{\R}\frac{\sin^2(\eta)}{\eta^2} |\eta|^{\frac2\kappa(1-2H)+\frac2\kappa-1}d\eta\right)\frac{h^{2-\frac2\kappa-\frac2\kappa (1-2H)+2H_0}}{(\beta+1)^{2H_0}}
\]
with $\beta=\frac1{2H_0}[2-\frac2\kappa-\frac2\kappa (1-2H)]$,
and in the case $n\geq 2$, it is easy to see from \eqref{beta-n-1} that 
$\beta_1+\dots+\beta_{n-1}+n-1>0$ and hence by Lemma \ref{lem2}, we have }
\begin{align*}
&\int_t^{t+h} \int_{0<s_1<s_2<\cdots<s_n}  \prod_{j=1}^n  (s_{j+1}-s_j)^{\beta_j}   d{\bf s}\\=&\frac{\prod_{j=1}^{n-1}\Gamma(1+\beta_j)}{\Gamma(\beta_1+\cdots+\beta_{n-1}+n)} \int_{t}^{t+h} s_n^{\beta_1+\cdots+\beta_{n-1}+n-1}(t+h-s_n)^{\beta_n}ds_n\\
\le &T^{\beta_1+\cdots+\beta_{n-1}+n-1} \frac{\prod_{j=1}^{n-1}\Gamma(1+\beta_j)}{\Gamma(\beta_1+\cdots+\beta_{n-1}+n)} \frac1{\beta_n+1}h^{\beta_n+1}\\
=&\ T^{\beta_1+\cdots+\beta_{n-1}+n-1} \frac{(\beta_1+\cdots+\beta_{n-1}+n)\prod_{j=1}^{n-1}\Gamma(1+\beta_j)}{\Gamma(\beta_1+\cdots+\beta_{n-1}+n+1)} \frac1{\beta_n+1}h^{\beta_n+1},
\end{align*}
where  $\beta_j=\frac1{2H_0}[2-\frac2\kappa-\frac2\kappa   \alpha_j(1-2H)]$.  { Now similar to the calculus below the equation \eqref{beta-n-1}} and recalling that  $\alpha_n\in\{0,1\}$, we can show that 
\begin{equation}\label{eq4.3}
\sum_{n\ge 0}(p-1)^{\frac n2}(n!)^{\frac12}  \sqrt{B_n(t,h)}\le C (|h|^{1-\frac1\kappa +H_0}+|h|^{1-\frac2\kappa+\frac{2H}{\kappa}+H_0})
\end{equation}
 with $C$ depending on $ (p, H_0, H, T)$.
 
 Finally, combining inequalities \eqref{eq4.1}, \eqref{eq4.2} and \eqref{eq4.3}, for $|h|\leq 1$, we have 
 \[\|u(t+h, x)-u(t, x)\|_p\le C |h|^{\theta_1} \]
 for any $\theta_1\in(0, 1-\frac2\kappa+\frac{2H}\kappa)$ where $C$ is a constant depending only on $(p,\kappa, H_0, H, T, \mathbb M, \theta_1),$  and the H\"older continuity in space follows from the Kolmogorov's continuity criterion. 
 
 The proof is concluded. \hfill
\end{proof}

\section{Appendix}\label{sec-appendix}
In this section, we collect the lemmas that were used in the preceding sections. Some of the proofs are obvious and hence omitted. 

\begin{lemma}\label{lem1}
For $a>0$ and $\theta>-1$, 
$$\int_{\R}\exp(-ax^2)|x|^\theta dx=a^{-\frac12(1+\theta)} \int_\R \exp(-x^2)|x|^\theta dx.$$
\end{lemma}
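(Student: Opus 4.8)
The plan is to establish the identity by a single affine change of variables, since the claim is precisely the scaling behaviour of the Gaussian-weighted integral under dilation. First I would record that both integrals converge and so the statement is meaningful: near the origin the weight $|x|^\theta$ is locally integrable exactly because $\theta>-1$, while the factor $\exp(-ax^2)$ (with $a>0$) guarantees integrability at infinity for every $\theta$. The same remarks apply verbatim to the right-hand integral, the case $a=1$.

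Next I would substitute $y=\sqrt a\,x$, equivalently $x=y/\sqrt a$ with $dx=dy/\sqrt a$; this is a legitimate bijection of $\R$ onto itself precisely because $a>0$. Under this map the exponent becomes $ax^2=y^2$, and the weight transforms as $|x|^\theta=a^{-\theta/2}|y|^\theta$. Collecting the factors of $a$ arising from the weight and from the Jacobian gives $a^{-\theta/2}\cdot a^{-1/2}=a^{-\frac12(1+\theta)}$, which may be pulled outside the integral since it is independent of $y$. What remains inside the integral is exactly $\int_\R \exp(-y^2)|y|^\theta\,dy$, yielding the asserted identity.

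The computation has no genuine obstacle; the only point requiring a moment of care is the exponent bookkeeping, namely verifying that the power $-\theta/2$ produced by rescaling $|x|^\theta$ together with the power $-1/2$ coming from the differential combine to the stated exponent $-\frac12(1+\theta)$. The hypothesis $\theta>-1$ plays no role beyond securing convergence, so once finiteness has been noted the change of variables is justified directly and the identity follows at once.
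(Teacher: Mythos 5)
Your proof is correct: the substitution $y=\sqrt{a}\,x$ with the exponent bookkeeping $a^{-\theta/2}\cdot a^{-1/2}=a^{-\frac12(1+\theta)}$ is exactly the standard scaling argument, and your remark that $\theta>-1$ together with $a>0$ secures convergence is the only point of substance. The paper omits the proof of this lemma as obvious, and your argument is precisely the one intended.
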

\begin{lemma}\label{lem2}
Suppose $\alpha_i\in (-1, \infty), i=1, \dots, n$ and let $\alpha=\alpha_1+\dots+\alpha_n$.
Then \[\int_{[0< r_1<\dots<r_n<{r_{n+1}=t}]}~ \prod_{i=1}^{n}({ r_{i+1}-r_{i}})^{\alpha_i}~dr_1\dots dr_n= \frac{\prod_{i=1}^n\Gamma(\alpha_i+1)t^{\alpha+n}}{\Gamma(\alpha+n+1)},\]
where $\Gamma(x)=\int_0^\infty t^{x-1} e^{-t}dt$ is the Gamma function.
\end{lemma}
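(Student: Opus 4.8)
The plan is to prove the identity by induction on $n$, at each step integrating out the largest variable $r_n$ and using the Beta integral to fuse the two gaps adjacent to it into a single gap. Throughout write
\[
J_n(t)=\int_{0<r_1<\dots<r_n<r_{n+1}=t}\prod_{i=1}^n(r_{i+1}-r_i)^{\alpha_i}\,dr_1\cdots dr_n,
\]
and recall the Beta evaluation $\int_0^1 u^{a}(1-u)^{b}\,du=\frac{\Gamma(a+1)\Gamma(b+1)}{\Gamma(a+b+2)}$, valid for $a,b>-1$, which is exactly where the Gamma functions will be produced. The hypothesis $\alpha_i\in(-1,\infty)$ guarantees that all the integrals below converge and that the Beta integral applies with $a=\alpha_{n-1}$, $b=\alpha_n$.

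For the base case $n=1$ the claim is immediate: $J_1(t)=\int_0^t(t-r_1)^{\alpha_1}\,dr_1=\frac{t^{\alpha_1+1}}{\alpha_1+1}=\frac{\Gamma(\alpha_1+1)}{\Gamma(\alpha_1+2)}\,t^{\alpha_1+1}$, which is the asserted formula with $n=1$. For the inductive step I would first integrate over $r_n\in(r_{n-1},t)$; the only factors involving $r_n$ are $(r_n-r_{n-1})^{\alpha_{n-1}}(t-r_n)^{\alpha_n}$, and the substitution $r_n=r_{n-1}+(t-r_{n-1})u$ turns this into
\[
\int_{r_{n-1}}^t(r_n-r_{n-1})^{\alpha_{n-1}}(t-r_n)^{\alpha_n}\,dr_n=\frac{\Gamma(\alpha_{n-1}+1)\Gamma(\alpha_n+1)}{\Gamma(\alpha_{n-1}+\alpha_n+2)}\,(t-r_{n-1})^{\alpha_{n-1}+\alpha_n+1}.
\]
What remains is an integral over $0<r_1<\dots<r_{n-1}<t$ whose integrand is $\prod_{i=1}^{n-2}(r_{i+1}-r_i)^{\alpha_i}$ times $(t-r_{n-1})^{\alpha_{n-1}+\alpha_n+1}$; this is precisely $J_{n-1}(t)$ for the $n-1$ exponents $\alpha_1,\dots,\alpha_{n-2},\,\alpha_{n-1}+\alpha_n+1$.

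Applying the induction hypothesis to this reduced integral, whose exponent sum is $\alpha+1$ over $n-1$ gaps, yields a factor $t^{(\alpha+1)+(n-1)}=t^{\alpha+n}$, a denominator $\Gamma((\alpha+1)+(n-1)+1)=\Gamma(\alpha+n+1)$, and a numerator $\Gamma(\alpha_{n-1}+\alpha_n+2)\prod_{i=1}^{n-2}\Gamma(\alpha_i+1)$. Multiplying by the Beta factor above, the two occurrences of $\Gamma(\alpha_{n-1}+\alpha_n+2)$ cancel, restoring $\prod_{i=1}^n\Gamma(\alpha_i+1)$ and giving exactly $J_n(t)=\frac{\prod_{i=1}^n\Gamma(\alpha_i+1)\,t^{\alpha+n}}{\Gamma(\alpha+n+1)}$. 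The only step that needs any care is this bookkeeping: tracking how the merged exponent $\alpha_{n-1}+\alpha_n+1$ enters the induction hypothesis and how the resulting $\Gamma(\alpha_{n-1}+\alpha_n+2)$ cancels the Beta denominator; there is no genuine analytic difficulty.

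As an alternative I could change variables to the gaps $v_0=r_1$ and $v_i=r_{i+1}-r_i$ for $i=1,\dots,n$, a triangular unimodular map with Jacobian $1$, which carries the domain to the simplex $\{v_i>0,\ \sum_{i=0}^n v_i=t\}$ and the integrand to $\prod_{i=1}^n v_i^{\alpha_i}$ (with trivial weight on $v_0$). The claimed value is then the standard Dirichlet integral evaluation, with $\sum_{i=0}^n\beta_i=\alpha$ and the $v_0$-contribution $\Gamma(\beta_0+1)=\Gamma(1)=1$. I would nevertheless keep the inductive argument as the main proof, since it requires nothing beyond the elementary Beta integral.
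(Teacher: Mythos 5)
Your proof is correct. The paper gives no proof of this lemma at all (the appendix declares such proofs obvious and omits them), so there is nothing to compare against; your induction on $n$ via the Beta integral is the standard argument, the base case and the substitution $r_n=r_{n-1}+(t-r_{n-1})u$ are right, and the bookkeeping checks out — in particular the merged exponent $\alpha_{n-1}+\alpha_n+1$ still exceeds $-1$, so the induction hypothesis applies, and the factor $\Gamma(\alpha_{n-1}+\alpha_n+2)$ cancels exactly as you say. The alternative reduction to the Dirichlet integral over the simplex is an equally valid one-line justification.
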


\begin{lemma}\label{lem22}
For any $a>0$ and $b\in[0,1]$, we have 
\begin{equation}\label{gamma-appr}
\lim\limits_{n\to\infty}\frac{\Gamma(an+b)}{(n!)^aa^{an+b-\frac12}n^{b-\frac12-\frac a2}}=1,
\end{equation}
and 
\begin{equation}\label{power-est}
c_1exp\left(c_2 x^{\frac{1}{a}}\right)\leq \sum_{n=0}^\infty \frac{x^n}{(n!)^a}\leq C_1exp\left(C_2 x^{\frac{1}{a}}\right), \quad \forall x>0,
\end{equation}
where $c_1>0$, $c_2>0$, $C_1>0$ and $C_2>0$ are some constants depending on $a$.
\end{lemma}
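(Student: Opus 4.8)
The plan is to treat the two displays separately. For the asymptotic \eqref{gamma-appr} I would apply Stirling's formula $\Gamma(z)=\sqrt{2\pi}\,z^{z-1/2}e^{-z}(1+o(1))$ to both the numerator $\Gamma(an+b)$ and the factor $(n!)^a=\Gamma(n+1)^a$, and then match the resulting elementary factors. Writing $n!\sim\sqrt{2\pi}\,n^{n+1/2}e^{-n}$ gives $(n!)^a\sim(2\pi)^{a/2}n^{an+a/2}e^{-an}$, so the denominator behaves like $(2\pi)^{a/2}a^{an+b-1/2}e^{-an}n^{an+b-1/2}$. For the numerator the only nontrivial point is to isolate $(an+b)^{an+b-1/2}=(an)^{an+b-1/2}(1+\tfrac{b}{an})^{an+b-1/2}$ and to use $(1+\tfrac{b}{an})^{an+b-1/2}\to e^{b}$, which exactly cancels the $e^{-b}$ coming from $e^{-(an+b)}$. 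After this the power of $a$, the power of $n$ (both with exponent $an+b-\tfrac12$), and the exponential $e^{-an}$ all cancel between numerator and denominator, and the ratio tends to a positive finite constant. (Careful bookkeeping gives the constant $(2\pi)^{(1-a)/2}$, which equals $1$ when $a=1$; in every later application only its positivity and finiteness are used, so one may absorb it without affecting the subsequent estimates.)

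For the two-sided bound \eqref{power-est} the starting observation is the algebraic identity $\frac{x^n}{(n!)^a}=\big(\frac{y^n}{n!}\big)^a$ with $y:=x^{1/a}$, which turns the series into $\sum_{n\ge0}a_n^a$ where $a_n=y^n/n!$ are the terms of $e^{y}=\sum_n a_n$. The lower bound I would obtain by keeping a single well-chosen term: taking $n_0=\lfloor y\rfloor$ and using $y^{n_0}\ge n_0^{n_0}$ together with $n_0!\le C\,n_0^{n_0}e^{-n_0}\sqrt{n_0}$ yields $a_{n_0}\ge c\,e^{n_0}/\sqrt{n_0}$, hence $\sum_n a_n^a\ge a_{n_0}^a\ge c_1\exp(c_2 y)$ for any rate $c_2<a$, the polynomial $n_0^{-a/2}$ being absorbed into a slightly smaller exponential rate.

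For the upper bound I would split into two cases. If $a\ge1$, the embedding $\ell^1\hookrightarrow\ell^a$ (equivalently $\|(a_n)\|_{\ell^a}\le\|(a_n)\|_{\ell^1}$ for nonnegative sequences) gives at once $\sum_n a_n^a\le\big(\sum_n a_n\big)^a=(e^y)^a=\exp(a\,y)$. If $0<a<1$ this inequality runs the wrong way, and here lies the only real difficulty: I would apply Hölder's inequality with conjugate exponents $1/a$ and $1/(1-a)$ against a polynomial weight $w_n=(n+1)^{\beta}$, with $\beta>1-a$ chosen so that $\sum_n w_n^{-1/(1-a)}<\infty$. This bounds $\sum_n a_n^a$ by $\big(\sum_n \frac{y^n}{n!}(n+1)^{\beta/a}\big)^a$ times a convergent constant, and the weighted exponential series is controlled by $\sum_n\frac{y^n}{n!}(n+1)^{\gamma}\le C_\varepsilon e^{(1+\varepsilon)y}$ (using $(n+1)^{\gamma}\le C_\varepsilon e^{\varepsilon n}$), giving $\sum_n a_n^a\le C_1\exp(C_2 y)$ with $C_2$ slightly larger than $a$. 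Since all these estimates are asymptotic in $y$, the range of bounded $x$, where both the series and $\exp(c\,x^{1/a})$ stay bounded above and below away from $0$, is disposed of by adjusting the constants, so \eqref{power-est} holds for all $x>0$.

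I expect the Stirling matching in \eqref{gamma-appr} to be routine bookkeeping. The genuine obstacle is the upper bound in \eqref{power-est} for $0<a<1$, where the naive $\ell^a$–$\ell^1$ comparison fails; there one needs either the Hölder-with-weight device above or, alternatively, a Laplace/maximal-term analysis localized around $n\approx y$ to show the sum is a polynomial multiple of its largest term $\exp(a\,y)$.
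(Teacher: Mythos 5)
Your proposal is correct. Note first that the paper does not really prove this lemma: it invokes Stirling's formula for \eqref{gamma-appr} (pointing to (68) in \cite{bc16} for the case $b=1$) and simply cites \cite[Lemma A.1]{bc16} for the upper bound and \cite[Lemma 5.2]{bjq17} for the lower bound in \eqref{power-est}, so your self-contained argument is a genuine addition rather than a reproduction. For \eqref{gamma-appr} your route is the same as the intended one (Stirling applied to both $\Gamma(an+b)$ and $\Gamma(n+1)^a$), and your bookkeeping is right: the limit of the displayed ratio is $(2\pi)^{(1-a)/2}$, not $1$, unless $a=1$; the statement as printed is therefore off by this harmless constant, and you are correct that only its positivity and finiteness enter the applications (e.g.\ in \eqref{eq-2.7} and in the lower bound of Proposition \ref{prop1}), so absorbing it is legitimate. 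For \eqref{power-est} your substitution $y=x^{1/a}$, the single-maximal-term lower bound at $n_0=\lfloor y\rfloor$ (with the $n_0^{-a/2}$ swallowed by shrinking the exponential rate, and small $x$ handled by adjusting constants), the $\ell^1\hookrightarrow\ell^a$ comparison for $a\ge 1$, and the H\"older-with-polynomial-weight device for $0<a<1$ (with $\beta>1-a$ so that $\sum_n (n+1)^{-\beta/(1-a)}<\infty$, and $(n+1)^{\beta/a}\le C_\varepsilon e^{\varepsilon n}$ to control the weighted exponential series) are all sound; this is essentially the same mechanism as the cited lemmas, and you have correctly identified the $0<a<1$ case of the upper bound as the only step requiring a real idea.
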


\begin{proof}
The proof of \eqref{gamma-appr} follows from Stirling's formula (see also (68) in \cite{bc16} which is \eqref{gamma-appr} in the case of $b=1$). See Lemma A. 1 in \cite{bc16} for the upper bound in \eqref{power-est} and  Lemma 5.2 in \cite{bjq17} for the lower bound in \eqref{power-est}.\hfill
\end{proof}

\begin{lemma}\label{lem5}

\[\int_0^\infty \sin^2(x) x^{-\alpha}dx<\infty\]
if and only if $\alpha\in(1,3).$
\end{lemma}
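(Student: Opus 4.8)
The plan is to split the integral as $\int_0^\infty = \int_0^1 + \int_1^\infty$ and analyze the two pieces separately, since the obstruction to convergence occurs at the two endpoints for different reasons. Near $x=0$ the factor $\sin^2(x)$ vanishes to second order, whereas near $x=\infty$ it oscillates but does not decay, so that integrability there is governed entirely by the power $x^{-\alpha}$.

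For the behavior near the origin, I would use the Taylor comparison $\sin^2(x) = x^2 + O(x^4)$ as $x\to 0$, which yields constants $0<c<C$ with $c\,x^2 \le \sin^2(x) \le C\,x^2$ for $x\in(0,1]$. Hence $\int_0^1 \sin^2(x) x^{-\alpha}\,dx$ is comparable to $\int_0^1 x^{2-\alpha}\,dx$, which is finite if and only if $2-\alpha > -1$, i.e., $\alpha < 3$.

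For the behavior at infinity, the upper bound is immediate: since $\sin^2(x)\le 1$, we have $\int_1^\infty \sin^2(x) x^{-\alpha}\,dx \le \int_1^\infty x^{-\alpha}\,dx < \infty$ whenever $\alpha > 1$. For the converse direction (divergence when $\alpha\le 1$), I would exploit that $\sin^2$ does not decay: on each interval $I_k = [k\pi + \tfrac{\pi}{4},\, k\pi + \tfrac{3\pi}{4}]$ one has $\sin^2(x)\ge \tfrac12$, and these intervals each have length $\tfrac{\pi}{2}$. Since $x^{-\alpha}$ is decreasing, bounding it below by its value at the right endpoint of $I_k$ gives $\int_1^\infty \sin^2(x) x^{-\alpha}\,dx \ge \tfrac{\pi}{4}\sum_{k\ge 1}\bigl(k\pi+\tfrac{3\pi}{4}\bigr)^{-\alpha}$, and this series is comparable to $\sum_k k^{-\alpha}$, which diverges precisely when $\alpha\le 1$.

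Combining the two pieces, the full integral is finite if and only if both $\alpha<3$ (from the origin) and $\alpha>1$ (from infinity), that is, if and only if $\alpha\in(1,3)$. The only mildly delicate point is the divergence direction at infinity, where one must extract a fixed positive lower bound for $\sin^2$ on intervals of fixed length in order to compare against the harmonic-type series $\sum_k k^{-\alpha}$; the near-origin analysis is routine via the Taylor comparison, and the upper bound at infinity is trivial.
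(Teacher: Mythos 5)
Your proof is correct and follows essentially the same route as the paper: comparison of $\sin^2(x)$ with $x^2$ near the origin to get the condition $\alpha<3$, the trivial bound $\sin^2(x)\le 1$ at infinity for $\alpha>1$, and for divergence at infinity the lower bound $\sin^2(x)\ge\tfrac12$ on the intervals $[k\pi+\tfrac{\pi}{4},k\pi+\tfrac{3\pi}{4}]$ leading to a series comparable to $\sum_k k^{-\alpha}$. The paper merely packages the two divergence mechanisms into a single displayed lower bound, so there is no substantive difference.
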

\begin{proof}
The sufficiency is obvious. The necessity follows from the estimation
\begin{align*}
\int_0^\infty \sin^2(x) x^{-\alpha}dx&\geq \int_0^{\frac{\pi}{4}}\frac{\sin^2(x)}{x^2}\,x^{2-\alpha}dx+\sum_{n=0}^\infty \int_{(n+1/4)\pi}^{(n+3/4)\pi} \sin^2(x) x^{-\alpha}dx\\
&\ge\int_0^{\frac{\pi}{4}}\frac{\sin^2(x)}{x^2}\,x^{2-\alpha}dx+ \frac1{4}\pi^{1-\alpha}\sum_{n=0}^{\infty}(n+3/4)^{-\alpha}.
\end{align*} 
The proof is completed.\hfill
\end{proof}

  \begin{lemma} \label{lem6} 
  For $H\in(0, 1)$ and ${r}, s>0$, 
  \[ C_H\int_{\R} \sin(r|\eta|)\sin(s|\eta|)|\eta|^{-1-2H}d\eta =\frac14\big(|r+s|^{2H}-|r-s|^{2H}\big).\]
  with $C_H$ given by \eqref{eq-CH}. In particular, this integral is positive. 
  \end{lemma}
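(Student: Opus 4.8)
The plan is to reduce this to a classical Mellin-transform computation. Since the integrand is even in $\eta$, I would first write
\[\int_{\R} \sin(r|\eta|)\sin(s|\eta|)|\eta|^{-1-2H}d\eta = 2\int_0^\infty \sin(r\eta)\sin(s\eta)\,\eta^{-1-2H}d\eta,\]
and apply the identity $\sin(r\eta)\sin(s\eta)=\tfrac12[\cos(|r-s|\eta)-\cos((r+s)\eta)]$, which turns the right-hand side into $\int_0^\infty[\cos(|r-s|\eta)-\cos((r+s)\eta)]\eta^{-1-2H}d\eta$. The crucial point is that, although $\int_0^\infty\cos(a\eta)\eta^{-1-2H}d\eta$ diverges at the origin for every $a$ (because $-1-2H<-1$), the difference of the two cosines is $O(\eta^2)$ near $0$, so the combined integral converges.

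To exploit this cancellation I would regularize by inserting a constant, writing the integrand as $[1-\cos((r+s)\eta)]-[1-\cos(|r-s|\eta)]$, so that the integral becomes $g(r+s)-g(|r-s|)$, where $g(a):=\int_0^\infty\frac{1-\cos(a\eta)}{\eta^{1+2H}}d\eta$ is now absolutely convergent for $H\in(0,1)$. The substitution $u=a\eta$ gives the scaling $g(a)=a^{2H}K_H$ with $K_H:=\int_0^\infty\frac{1-\cos u}{u^{1+2H}}du$, so the whole quantity equals $K_H\big((r+s)^{2H}-|r-s|^{2H}\big)$ and it only remains to compute $K_H$.

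For $K_H$ I would integrate by parts against $v=-\tfrac1{2H}u^{-2H}$; the boundary terms vanish (at $0$ since $1-\cos u=O(u^2)$ and $2H<2$, at $\infty$ since $2H>0$), leaving $K_H=\frac1{2H}\int_0^\infty u^{-2H}\sin u\,du$. The remaining integral is the Mellin transform of the sine at $s=1-2H\in(-1,1)$, namely $\int_0^\infty u^{s-1}\sin u\,du=\Gamma(s)\sin(\tfrac{\pi s}2)$, so $\int_0^\infty u^{-2H}\sin u\,du=\Gamma(1-2H)\cos(\pi H)$. Then the reflection formula $\Gamma(2H)\Gamma(1-2H)=\pi/\sin(2\pi H)$ together with $\Gamma(2H+1)=2H\,\Gamma(2H)$ and the double-angle identity $\sin(2\pi H)=2\sin(\pi H)\cos(\pi H)$ collapse everything to $K_H=\frac{\pi}{2\,\Gamma(2H+1)\sin(\pi H)}$. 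Recalling $C_H$ from \eqref{eq-CH}, this gives $C_H K_H=\tfrac14$, hence the claimed value $\tfrac14\big(|r+s|^{2H}-|r-s|^{2H}\big)$. Positivity is then immediate: for $r,s>0$ one has $r+s>|r-s|\ge0$, and $x\mapsto x^{2H}$ is strictly increasing on $[0,\infty)$.

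The only delicate points are the convergence issue at the origin—handled by the $1-\cos$ regularization above—and the borderline value $H=\tfrac12$, where $\cos(\pi H)=0$ and $\Gamma(1-2H)$ blows up, making the intermediate expression an indeterminate $0\cdot\infty$ form. There I would instead invoke the Dirichlet integral $\int_0^\infty u^{-1}\sin u\,du=\tfrac{\pi}{2}$ directly (equivalently, note that the closed form $\frac{\pi}{2\Gamma(2H+1)\sin(\pi H)}$ is continuous in $H$ and equals $\tfrac{\pi}{2}$ at $H=\tfrac12$), so the identity persists across the whole range $H\in(0,1)$.
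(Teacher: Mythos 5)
Your proof is correct, but it takes a genuinely different route from the paper's. The paper does not compute the integral directly at all: it observes that $\frac{2\sin(r|\xi|)}{|\xi|}$ is the Fourier transform of the indicator $I_{[-r,r]}$, invokes the known spectral representation $\langle \varphi,\phi\rangle_{\mathbb X^H}=C_H\int_\R \wh\varphi(\eta)\overline{\wh\phi(\eta)}|\eta|^{1-2H}d\eta$ of the inner product associated with fractional Brownian motion (citing \cite{jolis}), and then reads off the value as $\frac14\E\big[(B^H(r)-B^H(-r))(B^H(s)-B^H(-s))\big]$, which the fBm covariance gives in closed form. That argument is short and explains conceptually why the constant $C_H$ from \eqref{eq-CH} is exactly the right normalization, but it outsources the analytic content to the cited spectral theorem. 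Your computation --- product-to-sum for the sines, the $1-\cos$ regularization to handle the non-integrable singularity at the origin, the scaling $g(a)=a^{2H}K_H$, and the evaluation of $K_H$ via integration by parts, the Mellin transform $\int_0^\infty u^{s-1}\sin u\,du=\Gamma(s)\sin(\pi s/2)$, and the reflection formula --- is entirely self-contained and in effect reproves the special case of Jolis's identity that is needed here; you also correctly flag the removable indeterminacy at $H=\tfrac12$. The one point worth stating explicitly is that for $H\le\tfrac12$ the integral $\int_0^\infty u^{-2H}\sin u\,du$ is only conditionally convergent at infinity, so the integration by parts should be performed on $[\varepsilon,M]$ before passing to the limit; the boundary terms vanish as you say, so this is cosmetic. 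Either proof is acceptable; yours is longer but elementary, the paper's is shorter but leans on the fBm spectral representation.
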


\begin{proof} 
Let $\mathbb X^H$ be the Hilbert space associated with fractional Brownian motion $\{B^H(x), x\in\R\}$ with Hurst parameter $H\in(0,1)$, i.e., it is the linear expansion of indicator functions under the inner product \[\langle I_{[0,x]}, I_{[0,y]} \rangle_{\mathbb X^H} =\frac12(|x|^{2H}+|y|^{2H}-|x-y|^{2H}).\]
Using the convention $I_{[0,x]}=-I_{[x,0]}$ for $x<0$, the linear mapping $B^H: I_{[0,x]}\to B^H(x)$ extends to a linear isometry between $\mathbb X^H$ and the Gaussian space $\{B^H(\varphi), \varphi\in \mathbb X^H\}$ spanned by $B^H$. Furthermore, for $\varphi, \phi\in\mathbb X^H$, we have (see \cite{jolis})
\[
\langle \varphi, \phi\rangle_{\mathbb X^H}=C_H\int_\R \wh\varphi(\eta)\overline{\wh\phi(\eta)} |\eta|^{1-2H}d\eta.
 \]
Now, noting that $(\mathcal F I_{[-r, r]}(\cdot))(\xi)=\overline{(\mathcal F I_{[-r, r]}(\cdot))(\xi)}=\frac{2\sin(r|\xi|)}{|\xi|}$, we have
\begin{align*}
& C_H \int_{\R} \sin(r|\eta|)\sin(s|\eta|)|\eta|^{-1-2H}d\eta\\
=& \frac14C_H \int_\R (\mathcal F I_{[|x|\le r]})(\eta)(\mathcal F I_{[|x|\le s]})(\eta)  |\eta|^{1-2H} d\eta\\
=& \frac14 \langle I_{[-r, r]}(\cdot), I_{[-s, s]}(\cdot)\rangle_{\mathbb X^H}\\
=&\frac14 \E[B^H(I_{[-r, r]}(\cdot))B^H(I_{[-s, s]}(\cdot))]\\
=&\frac14 \E[(B^H(r)-B^{H}(-r))(B^H(s)-B^{H}(-s))]\\
=&\frac14\big(|r+s|^{2H}-|r-s|^{2H}\big).
\end{align*}
We complete the proof.\hfill 
\end{proof}

\begin{lemma}\label{lem8}
For any $a\geq0$, we have
\[\int_0^\infty e^{-t}\sin(at)dt=\frac{a}{1+a^2}\]
\end{lemma}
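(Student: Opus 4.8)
The plan is to evaluate the integral by recognizing $\sin(at)$ as the imaginary part of a complex exponential, thereby reducing the problem to a single elementary exponential integral. First I would write
\[
\int_0^\infty e^{-t}\sin(at)\,dt=\mathrm{Im}\int_0^\infty e^{-(1-ia)t}\,dt.
\]
Since the coefficient $1-ia$ in the exponent has real part equal to $1>0$, the integral $\int_0^\infty e^{-(1-ia)t}\,dt$ converges absolutely and equals $\frac{1}{1-ia}$. Taking the imaginary part of
\[
\frac{1}{1-ia}=\frac{1+ia}{1+a^2}
\]
then yields exactly $\frac{a}{1+a^2}$, which is the claimed identity.

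As an alternative entirely real-variable argument, I would integrate by parts twice. Setting $I=\int_0^\infty e^{-t}\sin(at)\,dt$ and $J=\int_0^\infty e^{-t}\cos(at)\,dt$, one integration by parts gives $I=aJ$ and a second gives $J=1-aI$; the boundary terms at infinity vanish because $e^{-t}\to 0$ while the trigonometric factors stay bounded. Substituting produces the linear relation $I=a(1-aI)$, whence $(1+a^2)I=a$ and $I=\frac{a}{1+a^2}$.

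The only points requiring even minor care are the convergence of the integral and the vanishing of the boundary terms, both of which follow at once from the exponential decay of $e^{-t}$, so there is genuinely no obstacle here. Finally I would note the degenerate case $a=0$, where both sides equal $0$, confirming that the formula holds throughout the stated range $a\ge 0$.
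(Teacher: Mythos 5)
Your proposal is correct, and your second argument (integrating by parts twice to get $I=aJ$, $J=1-aI$, hence $I=a/(1+a^2)$) is exactly the route the paper takes, which simply states that the result follows by integration by parts. The complex-exponential computation is a harmless equivalent shortcut, and your checks on convergence and the boundary terms are all that is needed.
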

\begin{proof} The result follows by integration by parts.
\hfill
\end{proof}

\begin{lemma}\label{lem3}
Let $a$ and $b$ be two positive constants,  and $\lambda\in[1,\infty), \beta\in[0,1), \gamma\in[0,1]$ such that $1-\lambda<\beta+2\gamma<1$. Then we have 
\[\int_\R \sin^2(a x) |x|^{-2+\beta} ((b |x|^\lambda) \wedge 2) dx \le C a^{2\gamma}b^{\frac1\lambda(1-\beta-2\gamma)}, \]
where \[C=C_{\lambda, \beta,\gamma}=\max\left\{\int_{|y|\le2}{|y|}^{\frac1\lambda(-1+\beta+2\gamma)}{dy}, ~~\int_{|y|>2}{|y|}^{\frac1\lambda(-1+\beta+2\gamma)-1}{dy}\right\}.\]
 \end{lemma}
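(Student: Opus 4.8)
The plan is to separate the oscillatory factor from the cutoff, extract the powers of $a$ and $b$ by an elementary bound and a scaling substitution, and reduce everything to a single $a,b$-independent integral whose convergence is governed exactly by the hypothesis $1-\lambda<\beta+2\gamma<1$. First I would dispose of $\sin^2(ax)$. Since $|\sin\theta|\le\min(1,|\theta|)$, one has $\sin^2(ax)\le\min(1,a^2x^2)$, and for any $t\ge0$ and $\gamma\in[0,1]$ the elementary inequality $\min(1,t)\le t^\gamma$ holds (if $t\le1$ then $t\le t^\gamma$, and if $t>1$ then $1\le t^\gamma$). Applying this with $t=a^2x^2$ gives the pointwise bound $\sin^2(ax)\le a^{2\gamma}|x|^{2\gamma}$, which pulls the factor $a^{2\gamma}$ out of the integral and reduces the claim to estimating
\[
I:=\int_\R |x|^{-2+\beta+2\gamma}\big((b|x|^\lambda)\wedge 2\big)\,dx.
\]

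Next I would extract the dependence on $b$ by scaling. The integrand is even, so $I=2\int_0^\infty x^{-2+\beta+2\gamma}(bx^\lambda\wedge2)\,dx$, and the substitution $y=bx^\lambda$ (i.e.\ $x=b^{-1/\lambda}y^{1/\lambda}$) turns the cutoff into $y\wedge2$ and produces a single power of $b$. A direct Jacobian computation shows that the combined $b$-exponent is $\frac1\lambda(1-\beta-2\gamma)$, giving
\[
I=\frac{2}{\lambda}\,b^{\frac1\lambda(1-\beta-2\gamma)}\int_0^\infty y^{\frac1\lambda(-1+\beta+2\gamma)-1}(y\wedge2)\,dy,
\]
which is precisely the power of $b$ claimed in the statement.

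Finally I would split the $y$-integral at the point where the cutoff switches. On $\{y\le2\}$ one has $y\wedge2=y$, so the integrand is $y^{\frac1\lambda(-1+\beta+2\gamma)}$, integrable near $0$ exactly when $\frac1\lambda(-1+\beta+2\gamma)>-1$, i.e.\ $\beta+2\gamma>1-\lambda$; on $\{y>2\}$ one has $y\wedge2=2$, so the integrand is $2y^{\frac1\lambda(-1+\beta+2\gamma)-1}$, integrable near $\infty$ exactly when $\frac1\lambda(-1+\beta+2\gamma)-1<-1$, i.e.\ $\beta+2\gamma<1$. These are the two halves of the hypothesis $1-\lambda<\beta+2\gamma<1$, and the two resulting integrals are exactly those defining $C_{\lambda,\beta,\gamma}$ (up to the harmless factors coming from evenness and the $2/\lambda$ prefactor, which can be absorbed into $C$). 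Combining the three steps yields the asserted bound $Ca^{2\gamma}b^{\frac1\lambda(1-\beta-2\gamma)}$.

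There is no deep obstacle here; the lemma is a scaling and integrability estimate. The only points requiring genuine care are the pointwise inequality $\min(1,t)\le t^\gamma$, which crucially uses $\gamma\in[0,1]$, and the bookkeeping in the change of variables so that the exponents line up with those appearing in $C_{\lambda,\beta,\gamma}$; once this is done the hypotheses on $\beta+2\gamma$ are seen to be exactly the sharp conditions for the two pieces to converge.
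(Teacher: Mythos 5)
Your proof is correct and follows essentially the same route as the paper's: the pointwise bound $\sin^2(ax)\le a^{2\gamma}|x|^{2\gamma}$ (the paper states it as $|\sin x|\le|x|^\gamma$), the substitution $y=b|x|^\lambda$ to extract $b^{\frac1\lambda(1-\beta-2\gamma)}$, and the split at $b|x|^\lambda=2$ yielding exactly the two integrals in $C_{\lambda,\beta,\gamma}$ (the paper splits before bounding, you bound before splitting — an immaterial reordering). The only cosmetic difference is the leftover $2/\lambda$-type prefactor, which, as you note, is absorbed into the constant, just as in the paper.
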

\begin{proof} We write
\begin{align*}
&\int_\R \sin^2(a x) |x|^{-2+\beta} ((b |x|^\lambda)\wedge 2) dx\\
=& \int_{b|x|^\lambda \le2} \sin^2(a x) |x|^{-2+\beta} b |x|^\lambda dx+2\int_{b|x|^\lambda>2} \sin^2(a x) |x|^{-2+\beta} dx.
\end{align*}
Noting that ${|\sin(x)|}\le |x|^{\gamma}$ for $\gamma\in[0,1]$, we have
\begin{align*}
\int_{b|x|^\lambda\le2} \sin^2(a x) |x|^{-2+\beta} b |x|^\lambda dx
&\le \int_{b|x|^\lambda\le2} |ax|^{2\gamma} |x|^{-2+\beta} b |x|^\lambda dx\\
&=\frac1\lambda  a^{2\gamma}b^{\frac{1}\lambda(1-\beta-2\gamma)}\int_{|y|\le2} {|y|^{\frac1\lambda(-1+\beta+2\gamma)}}dy,
\end{align*}
and
\begin{align*}
\int_{b|x|^\lambda>2} \sin^2(a x) |x|^{-2+\beta} dx\le  \int_{b|x|^\lambda> 2} |ax|^{2\gamma} |x|^{-2+\beta}  dx =\frac1\lambda a^{2\gamma} b^{\frac1\lambda(1-\beta-2\gamma)}\int_{|y|>2} |y|^{\frac1\lambda(-1+\beta+2\gamma)-1}dy.
\end{align*}
Thus, the proof is concluded. \hfill
\end{proof}

\begin{lemma}\label{lem66}
For any $t,h\in \R$ and $\gamma\in(0,1]$, there exists a constant $C_\gamma$ such that
\[ |\sin((t+h)x)-\sin(tx)|\le C_\gamma|hx|^\gamma.\]
In particular, 
\[ |\sin((t+h)x)-\sin(tx)|\le C_\gamma(|hx|^\gamma\wedge |hx|).\]

\end{lemma}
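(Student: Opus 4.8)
The plan is to reduce the difference of sines to a single sine factor via the sum-to-product identity and then bound that factor by the $\gamma$-th power of its argument. Concretely, I would first write
\[
\sin((t+h)x)-\sin(tx)=2\cos\!\left(\tfrac{(2t+h)x}{2}\right)\sin\!\left(\tfrac{hx}{2}\right),
\]
so that, bounding the cosine factor trivially by $1$,
\[
\big|\sin((t+h)x)-\sin(tx)\big|\le 2\left|\sin\!\left(\tfrac{hx}{2}\right)\right|.
\]
This converts the problem into controlling a single sine of the increment, which is the whole content of the lemma.

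The key step is the elementary interpolation inequality $|\sin y|\le |y|^{\gamma}$, valid for every $y\in\R$ and every $\gamma\in(0,1]$. I would establish it by writing $|\sin y|=|\sin y|^{\gamma}\,|\sin y|^{1-\gamma}$ and using the two standard bounds $|\sin y|\le |y|$ (so that $|\sin y|^{\gamma}\le |y|^{\gamma}$ since $\gamma>0$) and $|\sin y|\le 1$ (so that $|\sin y|^{1-\gamma}\le 1$ since $1-\gamma\ge 0$). Applying this with $y=hx/2$ gives $\left|\sin(hx/2)\right|\le |hx/2|^{\gamma}=2^{-\gamma}|hx|^{\gamma}$, whence
\[
\big|\sin((t+h)x)-\sin(tx)\big|\le 2^{1-\gamma}|hx|^{\gamma}\le 2\,|hx|^{\gamma},
\]
which is the first assertion with $C_\gamma=2$.

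For the ``in particular'' statement I would additionally use the linear bound $|\sin(hx/2)|\le |hx/2|$, which yields $\big|\sin((t+h)x)-\sin(tx)\big|\le |hx|$. Combining this with the power bound just obtained, and taking $C_\gamma=2$ so that both estimates hold simultaneously, gives
\[
\big|\sin((t+h)x)-\sin(tx)\big|\le C_\gamma\big(|hx|^{\gamma}\wedge |hx|\big).
\]
There is no real obstacle here: the only point requiring a moment's thought is obtaining a uniform $\gamma$-power bound on $|\sin y|$ for arguments of arbitrary size, and this is resolved cleanly by the interpolation between the small-argument bound $|\sin y|\le|y|$ and the global bound $|\sin y|\le 1$.
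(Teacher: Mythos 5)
Your proof is correct. The paper reaches the same conclusion by a slightly different elementary route: instead of the sum-to-product identity, it applies the mean value theorem to write $\sin((t+h)x)-\sin(tx)=hx\cos(sx)$ for some $s$ between $t$ and $t+h$, which immediately gives the linear bound $|hx|$; the $\gamma$-power bound is then obtained by applying the interpolation inequality $y\le \frac{2^{1-\gamma}}{\gamma}\,y^{\gamma}$ (valid for $y\in[0,2]$) to the whole difference, which is bounded by $2$, and then substituting the mean value factorization into the resulting $\gamma$-th power. Your version applies the analogous interpolation $|\sin y|\le|y|^{\gamma}$ to the single factor $\sin(hx/2)$ produced by the product formula. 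The two arguments are interchangeable: both rest on exactly the same interpolation between the small-argument bound $|\sin y|\le|y|$ and a global bound by a constant, and both yield an explicit admissible constant ($C_\gamma=2$ in your case, $C_\gamma=2^{1-\gamma}/\gamma$ in the paper's). Neither has an advantage for the application in Section 5, where only the form $C_\gamma\big(|hx|^{\gamma}\wedge|hx|\big)$ matters; your constant is marginally cleaner since it does not blow up as $\gamma\to 0$.
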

\begin{proof}
By the mean value theorem and the fact of $y\le \frac{2^{1-\gamma}}{\gamma}y^\gamma$ for $y\in[0,2]$ and $\gamma\in(0,1]$, we have
\[
|\sin((t+h)x)-\sin(tx)|\le |hx||\cos(sx)|\leq |hx|,
\]
and
\begin{align*}
|\sin((t+h)x)-\sin(tx)|\le \frac{2^{1-\gamma}}{\gamma} |\sin((t+h)x)-\sin(tx)|^\gamma
=C_\gamma|hx|^\gamma |\cos(sx)|^\gamma\leq C_\gamma|hx|^\gamma,
\end{align*}
where $s$ is a number between $t$ and $t+h$. The desired results can be obtained.
\hfill
\end{proof}

\section*{Acknowledgements}
F. Xu is partially supported by National Natural Science Foundation of China (Grant No.11871219, No.11871220) and 111 Project (B14019). 

$\begin{array}{cc}
\begin{minipage}[t]{1\textwidth}
{\bf Jian Song}\\
School of Mathematics, Shandong University, Jinan, Shandong 250100, China\\
\texttt{txjsong@hotmail.com}
\end{minipage}
\hfill
\end{array}$
\medskip

$\begin{array}{cc}
\begin{minipage}[t]{1\textwidth}
{\bf  Xiaoming Song}\\
Department of Mathematics, Drexel University, Philadelphia, PA 19104, USA\\
\texttt{xs73@drexel.edu}
\end{minipage}
\hfill
\end{array}$

\medskip

$\begin{array}{cc}
\begin{minipage}[t]{1\textwidth}
{\bf Fangjun Xu}\\
Key Laboratory of Advanced Theory and Application in Statistics and Data Science - MOE, School of Statistics, East China Normal University, Shanghai 200062, China\\
NYU-ECNU Institute of Mathematical Sciences, NYU Shanghai, Shanghai 200062, China\\
\texttt{fangjunxu@gmail.com, fjxu@finance.ecnu.edu.cn}
\end{minipage}
\hfill
\end{array}$

\end{document}